\newcommand{\x}{\mathbf{x}}
\newcommand{\V}{\mathcal{V}}
\newcommand{\F}{\mathcal{F}}
\newcommand{\E}{\mathcal{E}}
\newcommand{\R}{\mathbb{R}}
\newcommand{\C}{\mathbb{C}}
\newcommand{\Q}{\mathbb{Q}}
\newcommand{\Z}{\mathbb{Z}}
\newcommand{\supp}{\mathop{\mathrm{supp}}\nolimits}
\newcommand{\ch}{\mathop{\mathrm{char}}\nolimits}
\newcommand{\pf}{\mathop{\mathrm{pf}}\nolimits}
\newtheorem{theorem}{Theorem}[section]
\newtheorem{propos}[theorem]{Proposition}
\newtheorem{cor}[theorem]{Corollary}
\newtheorem{lem}[theorem]{Lemma}
\newtheorem*{mlem}{Main Lemma}
\theoremstyle{definition}
\newtheorem{defin}[theorem]{Definition}
\newtheorem{remark}[theorem]{Remark}
\newtheorem{quest}[theorem]{Question}
\author{Alexander A. Gaifullin}
\thanks{The work was partially supported by the Russian Foundation for Basic Research (projects 12-01-31444 and 12-01-92104), by a grant of the President of the Russian Federation (project MD-4458.2012.1), by a grant of the Government of the Russian Federation (project 2010-220-01-077),  by a programme of the Branch of Mathematical Sciences of the Russian Academy of Sciences, and by a grant from Dmitry Zimin's ``Dynasty'' foundation.}
\title[Generalization of Sabitov's Theorem to arbitrary dimensions]
{Generalization of Sabitov's Theorem to polyhedra of arbitrary dimensions}
\date{}
\address{Steklov Mathematical Institute, Moscow, Russia\newline
${}$\hspace{4.3mm}Moscow State University, Moscow, Russia\newline ${}$\hspace{4.3mm}Kharkevich Institute for Information Transmission Problems, Moscow, Russia\newline 
${}$\hspace{4.3mm}Yaroslavl State University, Yaroslavl, Russia}
\email{agaif@mi.ras.ru}
\begin{document}

\begin{abstract}
In 1996 Sabitov proved that the volume of an arbitrary simplicial polyhedron~$P$ in the $3$-dimensional Euclidean space~$\R^3$ satisfies a monic (with respect to~$V$) polynomial relation $F(V,\ell)=0$, where $\ell$ denotes the set of the squares of edge lengths of~$P$. In 2011 the author proved the same assertion for polyhedra in~$\R^4$. In this paper, we prove that the same result is true in arbitrary dimension $n\ge 3$. Moreover, we show that this is true not only for simplicial polyhedra, but for all polyhedra with triangular $2$-faces. As a corollary, we obtain the proof in arbitrary dimension of the well-known Bellows Conjecture posed by Connelly in~1978. This conjecture claims that the volume of any flexible polyhedron is constant. Moreover, we obtain the following stronger result. If $P_t$, $t\in [0,1]$, is a continuous deformation of a polyhedron such that the combinatorial type of~$P_t$ does not change and every $2$-face of~$P_t$ remains congruent to the corresponding face of~$P_0$, then the volume of~$P_t$ is constant. We also obtain non-trivial estimates for the oriented volumes of complex simplicial polyhedra in~$\C^n$ from their orthogonal edge lengths.
\end{abstract}

\maketitle
\section{Introduction}

It is well known that the area of a triangle can be computed from its side lengths by Heron's formula
$$
A^2=\frac{1}{16}\left(2a^2b^2+2b^2c^2+2c^2a^2-a^4-b^4-c^4\right).
$$
However, it is impossible to compute the area of a polygon with at least $4$ sides from its side lengths, since any such polygon can be flexed so that its area changes, while its  side lengths remain constant. 
The situation changes in dimension $3$.

\begin{theorem}[Sabitov~\cite{Sab96}, \cite{Sab98}]\label{theorem_Sabitov}
For any combinatorial type of simplicial polyhedra in the $3$-dimensional Euclidean space, there exists a polynomial relation
\begin{equation}\label{eq_Sabitov}
V^{2N}+a_1(\ell)V^{2N-2}+\dots+a_N(\ell)=0
\end{equation}
between the volume~$V$ of a polyhedron of the given combinatorial type and the set~$\ell$ of the squares of its edge lengths. This relation is monic with respect to $V$, and the coefficients $a_i(\ell)$ are polynomials with rational coefficients in the squares of edge lengths of the polyhedron. 
\end{theorem}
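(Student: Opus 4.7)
The plan is to introduce the squared lengths of the interior diagonals of the polyhedron as auxiliary variables, reduce the problem to an integrality statement in commutative algebra, and then establish that integrality by a valuation-theoretic (``place'') argument.

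First, I would decompose $P$ into tetrahedra $T_1,\dots,T_k$ by fixing a vertex $v_0$ and coning off every triangular boundary face not containing $v_0$. Let $d=(d_1,\dots,d_s)$ denote the squares of the lengths of the segments from $v_0$ to the other vertices that are not themselves edges of $P$ (the ``interior diagonals''). For each tetrahedron $T_j$ the Cayley--Menger determinant expresses $288\,V(T_j)^2$ as a polynomial with integer coefficients in the six squared edge lengths of $T_j$. Summing the signed volumes of the tetrahedra and clearing the square roots that arise from the sign choices, one shows that $V$ is a root of a polynomial that is monic in $V$ with coefficients in $\Q[\ell,d]$. Thus, by transitivity of integral extensions, it suffices to prove that each $d_i$ is integral over the subring $R=\Q[\ell]$.

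To prove this integrality I would use the valuative criterion, which reduces the claim to the geometric statement: along any one-parameter algebraic family of (possibly complex) polyhedra of the given combinatorial type, if the squared edge lengths remain bounded then so do the squared diagonal lengths. To produce an explicit monic polynomial I would proceed by induction on the combinatorial distance $m$ between $v_0$ and the other endpoint of the diagonal, measured in the $1$-skeleton of $\partial P$. For small $m$, a Cayley--Menger relation among five suitably chosen vertices already yields an equation of the desired form; for larger $m$, one substitutes the monic polynomials already obtained for shorter diagonals and eliminates the auxiliary variables via iterated resultants.

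The principal obstacle is the \emph{monicity} requirement. Producing \emph{some} polynomial equation between $d_i$ and $\ell$ is comparatively easy, as the configuration variety is algebraic and the projection to the squared edge lengths is generically finite, so $d_i$ is in any case algebraic over $R$. The real difficulty is to ensure that the resulting polynomial can be taken monic: its leading coefficient in $d_i$ must be a nonzero constant, for otherwise a diagonal could blow up in a limiting configuration while the edge lengths stayed bounded, destroying integrality. Ruling out this pathology is the technical heart of Sabitov's argument and is where the bulk of the work must be concentrated.
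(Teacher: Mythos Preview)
The step ``it suffices to prove that each $d_i$ is integral over $R=\Q[\ell]$'' sets up a target that is in general \emph{false}, so the plan cannot be completed as written. The geometric statement you invoke --- that in any algebraic family of possibly complex polyhedra bounded squared edge lengths force bounded squared diagonal lengths --- fails because the orthogonal squared distance $\sum_j(x_j-y_j)^2$ over~$\C$ obeys no triangle inequality. Already for the octahedron (the suspension of a $4$-cycle $p_1p_2p_3p_4$ with poles $N,S$) one may take $N=(0,0,t)$, $S=(0,0,-t)$, $p_j=(j,it,0)$: every squared edge length is then a fixed integer independent of~$t$, while $\ell_{NS}=4t^2$. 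If $\ell_{NS}$ satisfied a monic relation over~$\Q[\ell]$ it would be bounded on this family, which it is not; equivalently, there is a place of~$\Q(\x_{\V})$ finite on all squared edge lengths with $\varphi(\ell_{NS})=\infty$. The paper makes the same point when discussing complex polyhedra: one \emph{cannot} estimate orthogonal diagonal lengths from orthogonal edge lengths. So the difficulty you correctly isolate at the end is not merely the ``technical heart'' awaiting details --- it is an actual obstruction to the strategy of proving the individual $d_i$ integral.

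The paper's proof sidesteps this by refusing to fix the cone decomposition in advance. For each place~$\varphi$ finite on~$R_K$ it forms the clique complex~$K_{\varphi}$ on~$\V$ spanned by all pairs $\{u,v\}$ with $\varphi(\ell_{uv})\ne\infty$, and proves (Main Lemma) that $K_{\varphi}$ collapses onto a subcomplex of dimension at most~$\lfloor n/2\rfloor$; hence $H_{n-1}(K_{\varphi})=0$ for $n\ge 3$. The boundary cycle therefore bounds \emph{some} $n$-chain in~$K_{\varphi}$, and this $\varphi$-dependent filling expresses $W_Z$ as a sum of Cayley--Menger simplex volumes each of whose squared edges is $\varphi$-finite. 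Thus $\varphi(W_Z)\ne\infty$ for every such~$\varphi$, and $W_Z$ is integral over~$R_K$. The earlier $3$-dimensional proofs of Sabitov and of Connelly--Sabitov--Walz likewise do \emph{not} assert that diagonals are integral: they combine the (non-monic) Cayley--Menger relations with an induction on the combinatorial complexity of the polyhedron itself (genus, number of vertices), cutting along a diagonal into pieces for which the theorem is already known and eliminating that diagonal from the resulting system so as to preserve monicity in~$V$ rather than in the diagonal.
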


This theorem shows that the volume of a simplicial polyhedron is determined by its combinatorial type and edge lengths up to finitely many possibilities. The polynomial in the left-hand side of~\eqref{eq_Sabitov} is called a \textit{Sabitov polynomial\/} for  polyhedra of the given combinatorial type. It is convenient to assume that the relation~\eqref{eq_Sabitov} contains only even powers of~$V$, since we shall mean under~$V$ the oriented volume, which changes its sign whenever we reverse the orientation. Nevertheless, this is not a restriction. Indeed, if we had a monic polynomial relation $F(V,\ell)=0$ containing both even and odd powers of~$V$, then $F(V,\ell)F(-V,\ell)=0$ would be the required relation that contains only even powers of~$V$.

Theorem~\ref{theorem_Sabitov} was improved by Connelly, Sabitov, and Walz~\cite{CSW97}. They proved that for $W=12V$, there is a monic  relation
$$
W^{2N}+b_1(\ell)W^{2N-2}+\dots+b_N(\ell)=0
$$ 
such that $b_i$ are polynomials  with \textit{integral\/} coefficients.  In 2011 the author~\cite{Gai11} proved that the assertion of Theorem~\ref{theorem_Sabitov} holds as well for simplicial polyhedra in the $4$-dimensional Euclidean space. 
The main result of the present paper is the generalization of Theorem~\ref{theorem_Sabitov} to polyhedra in the Euclidean space of an arbitrary dimension. We shall prove the strong version of this theorem, i.\,e., the analogue of the result by Connelly, Sabitov, and Walz mentioned above.

Notice that the polyhedron is not supposed to be convex, and is not supposed to have  a combinatorial type of a sphere. Indeed, the polyhedron is not even supposed to be non-self-intersected and non-degenerate. To formulate our result rigorously, we need to give a definition of a possibly self-intersected and degenerate polyhedron.

First, we recall some basic definitions concerning simplicial complexes.
An \textit{abstract simplicial complex\/} on a vertex set~$\V$ is a set $K$ of subsets of~$\V$ such that $\emptyset\in K$, and $\tau\in K$ implies $\sigma\in K$ whenever $\sigma\subset\tau\subset \V$. (We use $\subset$ for non-strict inclusion.) By definition, $\dim\sigma$ is the cardinality of~$\sigma$ minus~$1$. To simplify the terminology, $n$-dimensional simplices will be called $n$-simplices. The geometric realization of a simplicial complex $K$ will be denoted by~$|K|$. 

A simplicial complex $K$ is called a $k$-dimensional \textit{pseudo-manifold\/} if every simplex of~$K$ is contained in a $k$-simplex, and every $(k-1)$-simplex of~$K$ is contained in  exactly two $k$-simplices.
$K$ is called \textit{strongly connected\/} if every two of its $k$-simplices can be connected by a sequence of $k$-simplices such that every two consecutive $k$-simplices have a common $(k-1)$-face. A pseudo-manifold is said to be \textit{oriented\/} if its $k$-simplices are endowed with consistent orientations. When we consider oriented simplices, the notation $\sigma=\{v_0,\ldots,v_k\}$  will mean that the simplex~$\sigma$ is endowed with the orientation given by the ordering $v_0,\ldots,v_k$ of its vertices.

\begin{defin}\label{def_ph1}
Let $K$ be an oriented $(n-1)$-dimensional strongly connected pseudo-manifold. A \textit{simplicial polyhedron\/} (or a \textit{polyhedral surface}) of combinatorial type~$K$ is a mapping $P\colon |K|\to\R^n$ such that the restriction of~$P$ to every simplex of~$|K|$ is linear. A polyhedron~$P$ is called \textit{embedded\/} if $P$ is injective.
\end{defin}

Notice that we do not require the restriction of~$P$ to a simplex of~$|K|$ to be non-degenerate. For instance, $P$ may map the whole simplex to a point.

Intuitively, we mean under a simplicial polyhedron in~$\R^n$ the  region bounded by  an embedded polyhedral surface $P(|K|)$. The oriented volume of this region will be denoted by $V(P)$ and will be called the oriented volume of~$P$.

However, non-embedded polyhedra are also useful in many problems especially concerning rigidity and flexibility. If $P\colon |K|\to\R^n$ is a non-embedded polyhedral surface, we may also try to consider some ``region bounded by it''. Though this region is not well defined as a subset of~$\R^n$, its oriented volume is well defined. The definition is as follows. For a point~$x\in\R^n\setminus P(|K|)$, consider a curve connecting a point $x$ with infinity, and denote by $\lambda(x)$ the algebraic intersection index of this curve and the singular surface $P(|K|)$.
Then $\lambda$ is an almost everywhere defined  piecewise constant function on~$\R^n$ with compact support. By definition, a \textit{generalized oriented volume\/}~$V(P)$ of a polyhedron~$P\colon |K|\to \R^n$ is equal to 
$
\int_{\R^n}\lambda(x)\,dV
$, where $dV=dx_1\dots dx_n$ is the standard volume element in~$\R^n$.

If $O\in\R^n$ is an arbitrary point, the generalized oriented volume~$V(P)$ can be equivalently defined by
$$
V(P)=\sum_{i=1}^qV\left(O,P\bigl(v_1^{(i)}\bigr),\ldots,P\bigl(v_n^{(i)}\bigr)\right),
$$
where $\{v_1^{(i)},\ldots,v_n^{(i)}\}$, $i=1,\ldots,q$, are all pairwise distinct positively oriented $(n-1)$-simplices of~$K$, and $V(p_0,\ldots,p_n)$ stands for the oriented volume of the simplex with vertices~$p_0,\ldots,p_n$. It can be easily checked that $V(P)$ is independent of the choice of~$O$.

The main result of this paper is as follows.
  
\begin{theorem}\label{theorem_main}
Suppose, $n\ge 3$. Let $K$ be an $(n-1)$-dimensional oriented strongly connected pseudo-manifold. Then there exists a monic polynomial relation
$$
W^{2N}+b_1(\ell)W^{2N-2}+\dots+b_N(\ell)=0
$$
that holds for all polyhedra $P\colon |K|\to\R^n$ of combinatorial type~$K$. Here  $W=2^{[\frac{n}{2}]}\,n!\,V(P)$,  and
 $b_i(\ell)$ are polynomials with integral coefficients in the squares of edge lengths of~$P$. 
\end{theorem}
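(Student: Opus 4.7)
The plan is to establish the stronger statement that $W = 2^{[n/2]}\,n!\,V(P)$ is \emph{integral} over the polynomial ring $\Z[\ell]$ in the indeterminate squared edge lengths $\ell_{uv}$. Considering the ``universal'' polyhedron, where the vertex coordinates $x_k^{(v)}$ for $v\in\V$, $k=1,\ldots,n$, are independent indeterminates, both $\ell_{uv}=\sum_k(x_k^{(u)}-x_k^{(v)})^2$ and $V(P)$ are elements of $\Q[x_k^{(v)}]$, and the integer normalization in the definition of $W$ is designed so that $W$ lies in $\Z[x_k^{(v)}]$: the factor $n!$ absorbs the denominator in the simplex volume formula, while $2^{[n/2]}$ absorbs the $2$-denominators arising from the Pfaffian/Cayley--Menger-type expressions for (squared) volume that will appear at each degeneration step.

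By the standard valuative criterion for integrality, it then suffices to verify that for every discrete valuation $\nu$ on an algebraic closure of $\Q(x_k^{(v)})$ with $\nu(\ell_{uw})\geq 0$ for all edges $\{u,w\}$ of $K$, one also has $\nu(W)\geq 0$. Geometrically, this amounts to the following \emph{boundedness statement}: given a one-parameter family $P_t$ of complex polyhedra of combinatorial type $K$, with vertex coordinates in the Laurent series field $\C((t))$ and all squared edge lengths of nonnegative order at $t=0$, the generalized oriented volume $V(P_t)$ must also be of nonnegative order. This is the reformulation I would attempt to prove.

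The argument would proceed by induction on the number of top-dimensional simplices of $K$. Given such a family, one examines the escape rates $d_v=-\mathrm{ord}_t\,P_t(v)$ of the vertices and their leading coefficients in $\C^n$. Because every squared edge length stays bounded, the leading-order parts of all ``fast'' vertices are constrained to lie on the isotropic quadric $\sum_k y_k^2=0$, and vertices joined by an edge of $K$ have escape rates and leading terms that are linked by these isotropy conditions. This forces the fast vertices to organize themselves into isotropic affine strata indexed by escape rate; within the filtration by strata, the leading term of $V(P_t)$ decomposes as a signed sum of volumes attached to sub-pseudomanifolds obtained from $K$ by cut-and-paste operations (contracting or gluing along boundaries of the strata). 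Each such piece is a polyhedron on a strictly simpler strongly connected pseudo-manifold, to which the inductive hypothesis applies; strong connectivity of $K$ is exactly what ensures that the decomposition respects the overall combinatorics and that no uncontrolled higher-complexity remainder survives.

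The main obstacle is carrying out the above degeneration analysis \emph{uniformly} in dimension. In dimensions three and four the possible degenerations can be classified and resolved by explicit bistellar or edge-contraction moves, as in the proofs of \cite{Sab96} and \cite{Gai11}, but for general $n$ such a case analysis is not feasible, and one must instead design an abstract cut-and-paste operation on $(n-1)$-dimensional strongly connected pseudo-manifolds that handles all degeneration patterns at once. A second delicate issue is integrality over $\Z$ rather than $\Q$: every reduction step introduces rational factors from simplex volumes and from determinants of Gram matrices, and one must verify that the normalization $2^{[n/2]}\,n!$ precisely cancels all of them simultaneously. This is the arithmetic refinement parallel to the strengthening of Sabitov's original theorem by Connelly, Sabitov, and Walz, and, together with the arbitrary-$n$ combinatorics, is where the real work lies.
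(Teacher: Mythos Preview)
Your opening reformulation---passing to the universal polyhedron over $\Z[x_{v,i}]$ and reducing the theorem to the statement that $W$ is integral over the subring generated by the $\ell_{uv}$---is exactly how the paper begins (its Theorem~2.1). One technical quibble: the valuative criterion for integrality requires \emph{all} places of the fraction field, not just discrete valuations; restricting to $\C((t))$ is not a priori sufficient, and the paper works with arbitrary places $\varphi\colon\Q(\x_\V)\to F\cup\{\infty\}$ throughout.

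The substantive divergence is in what comes next. You propose to proceed by induction on the number of top simplices of $K$, analyzing how vertices escape to infinity along isotropic directions and performing cut-and-paste operations to reduce to simpler pseudo-manifolds. This is precisely the style of argument used in dimensions~$3$ and~$4$, and you correctly identify the obstruction: in arbitrary dimension there is no workable classification of degeneration patterns or corresponding combinatorial moves. Your proposal does not resolve this---it ends by naming the difficulty rather than overcoming it---so as written it is a strategy sketch with the central step missing.

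The paper takes a genuinely different route that sidesteps the induction entirely. Given a place $\varphi$, it forms the graph $G_\varphi$ on $\V$ whose edges are the pairs $\{u,v\}$ with $\varphi(\ell_{uv})$ finite, and its clique complex $K_\varphi$. The Main Lemma asserts that $K_\varphi$ collapses onto a subcomplex of dimension at most $[n/2]$; in particular $H_{n-1}(K_\varphi)=0$ for $n\ge 3$. Since $K\subset K_\varphi$ whenever $\varphi$ is finite on all edge lengths, the fundamental cycle $[K]$ bounds an $n$-chain $Y$ in $K_\varphi$, and then $W$ decomposes as a sum of $W_{\partial\Delta}$ over $n$-simplices $\Delta$ of $K_\varphi$, each of which is integral over its own edge-length ring by the Cayley--Menger formula (this is also where the factor $2^{[n/2]}n!$ is seen to be exactly right). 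The proof of the Main Lemma is where the isotropic geometry you allude to actually appears, but packaged combinatorially: one builds carefully chosen orderings on the simplices of $K_\varphi$ and uses Cayley--Menger relations among $n+2$ points to force the needed collapses. No induction on $K$ is required, and strong connectivity plays no role beyond guaranteeing a fundamental class.
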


The most important application of Sabitov's Theorem concerns flexible polyhedra. A \textit{flex} of a simplicial polyhedron~$P\colon |K|\to\R^n$ is a continuous family of polyhedra $P_t\colon |K|\to\R^n$ of the same combinatorial type such that $P_0=P$, the edge lengths of~$P_t$ are constant, and $P_0$ cannot be taken to~$P_1$ by an isometry of~$\R^n$. A polyhedron is called \textit{flexible\/} if it admits a flex, and is called \textit{rigid\/} if it does not admit a flex. 
In 1896 Bricard~\cite{Bri97} constructed his famous self-intersected flexible octahedra in~$\R^3$. The  first example of an embedded flexible polyhedron in~$\R^3$ was constructed by Connelly~\cite{Con77}; the simplest known example belongs to Steffen. 
Examples of flexible cross-polytopes in~$\R^4$ were constructed by Walz (cf.~\cite{Pak08}) and Stachel~\cite{Sta00}. All these examples are not embedded. The problem on existence of embedded flexible polyhedra in~$\R^4$ and the problem on existence of non-trivial flexible polyhedra in~$\R^5$ remain open.

The Bellows Conjecture due to Connelly~\cite{Con78} claims that the generalized oriented volume of any flexible polyhedron remains constant under the flex.  
It follows from Theorem~\ref{theorem_Sabitov} that the volume of a simplicial polyhedron in~$\R^3$ with the given combinatorial type and edge lengths can take only finitely many values, which implies that the volume cannot change continuously. Notice that it is very important here that the polynomial relation~\eqref{eq_Sabitov} is monic with respect to $V$. Otherwise, it could happen that for some particular values of the edge lengths, all coefficients of~\eqref{eq_Sabitov} would vanish, hence, this relation would impose no restriction on~$V$.

\begin{cor}[Sabitov~\cite{Sab96}]\label{cor_Sabitov}
The Bellows Conjecture holds for flexible polyhedra in~$\R^3$.
\end{cor}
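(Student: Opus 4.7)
The plan is to derive the corollary directly from Theorem~\ref{theorem_Sabitov} by a continuity-plus-finiteness argument, the main issue being to ensure that the finite set of possible volumes is genuinely finite for the particular value of~$\ell$ realized by the flex.

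First I would fix a flex $P_t\colon|K|\to\R^3$, $t\in[0,1]$, and record two elementary facts. On the one hand, the generalized oriented volume $V(P_t)$ is a polynomial function in the coordinates of the vertices $P_t(v)$, as follows immediately from the summation formula
$$
V(P_t)=\sum_{i=1}^qV\left(O,P_t\bigl(v_1^{(i)}\bigr),P_t\bigl(v_2^{(i)}\bigr),P_t\bigl(v_3^{(i)}\bigr)\right),
$$
given in the introduction; since the vertices move continuously in~$t$, the function $t\mapsto V(P_t)$ is continuous (in fact, real-analytic if the flex is). On the other hand, by the definition of a flex, the squared edge lengths $\ell$ are constant along~$t$, so the coefficients $a_1(\ell),\ldots,a_N(\ell)$ appearing in the Sabitov relation~\eqref{eq_Sabitov} are constants independent of~$t$.

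Next I would apply Theorem~\ref{theorem_Sabitov} to the combinatorial type~$K$: the values $V(P_t)$ are roots of one and the same univariate polynomial
$$
F(V)=V^{2N}+a_1(\ell)V^{2N-2}+\dots+a_N(\ell).
$$
Because this polynomial is \emph{monic} in~$V$, it is not the zero polynomial regardless of the value of~$\ell$, so it has at most $2N$ roots in~$\R$. Thus $\{V(P_t):t\in[0,1]\}$ lies in a fixed finite subset of~$\R$. The key step is precisely this invocation of monicity; without it, one could imagine the coefficients of~$F$ all vanishing at the particular $\ell$ realized by the flex, in which case $F(V)=0$ would impose no constraint on~$V(P_t)$.

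Finally, a continuous real-valued function on the connected interval~$[0,1]$ whose image is contained in a finite set must be constant. Hence $V(P_t)$ does not depend on~$t$, which is exactly the Bellows Conjecture for flexible polyhedra in~$\R^3$. I expect the only conceptual subtlety to be the role of monicity just highlighted; everything else is a routine packaging of Theorem~\ref{theorem_Sabitov} with the continuity of the oriented volume in the vertex positions.
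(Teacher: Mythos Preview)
Your argument is correct and matches the paper's own reasoning essentially verbatim: the paper derives the corollary from Theorem~\ref{theorem_Sabitov} by noting that monicity forces the set of possible volumes at the fixed edge-length data~$\ell$ to be finite, so the continuous function $t\mapsto V(P_t)$ must be constant. You have also correctly identified monicity as the one non-trivial point.
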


In the same way, Theorem~\ref{theorem_main} implies

\begin{cor}
The Bellows Conjecture holds for flexible polyhedra in~$\R^n$, $n\ge 3$. 
\end{cor}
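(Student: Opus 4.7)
The plan is to mimic the argument by which Corollary~\ref{cor_Sabitov} was deduced from Theorem~\ref{theorem_Sabitov}, now using the full-dimensional result Theorem~\ref{theorem_main}. Let $P_t\colon |K|\to\R^n$, $t\in[0,1]$, be a flex of a polyhedron of combinatorial type~$K$, and set $W(t)=2^{[n/2]}\,n!\,V(P_t)$. By the very definition of a flex, the squares of the edge lengths $\ell$ of~$P_t$ are independent of~$t$, so the quantities $b_i(\ell)$ produced by Theorem~\ref{theorem_main} are constants in~$t$.

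Apply Theorem~\ref{theorem_main} to the oriented strongly connected pseudo-manifold~$K$. For every $t\in[0,1]$ the real number $W(t)$ is a root of the single polynomial
$$
f(W)=W^{2N}+b_1(\ell)W^{2N-2}+\dots+b_N(\ell)\in\R[W].
$$
Monicity of~$f$ ensures that $f$ is not the zero polynomial even if the particular edge lengths happened to make all coefficients $b_i(\ell)$ vanish, and so $f$ has only finitely many real roots. Consequently $W(t)$ takes values in a finite subset of~$\R$.

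On the other hand, by the vertex formula
$$
V(P_t)=\sum_{i=1}^{q}V\!\left(O,P_t\bigl(v_1^{(i)}\bigr),\ldots,P_t\bigl(v_n^{(i)}\bigr)\right)
$$
the oriented volume $V(P_t)$ is a polynomial in the coordinates of the images $P_t(v)$ of the vertices, hence a continuous function of~$t$, and therefore so is $W(t)$. A continuous map from the connected interval $[0,1]$ into a finite subset of~$\R$ must be constant, so $W(t)$, and with it $V(P_t)$, does not depend on~$t$. This is the Bellows Conjecture in dimension~$n$.

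There is no genuine obstacle at this stage, since the substantive work is already contained in Theorem~\ref{theorem_main}; what the corollary really uses is the monicity assertion of that theorem, which is precisely the feature that rules out the pathological scenario in which all $b_i(\ell)$ could vanish and thus fail to constrain~$W$ at all.
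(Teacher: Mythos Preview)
Your proof is correct and follows essentially the same approach as the paper, which simply remarks that the corollary is obtained ``in the same way'' as Corollary~\ref{cor_Sabitov}: the monic relation from Theorem~\ref{theorem_main} forces $W(P_t)$ into a finite set, and continuity of the volume in~$t$ then gives constancy.
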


Unfortunately, the importance of this result in higher dimensions is reduced by the lack of examples of flexible polyhedra.

Sometimes it is convenient to work with a more general definition of a simplicial polyhedron in~$\R^n$, than Definition~\ref{def_ph1}. Consider an (abstract) simplex~$\Delta^M$ of large dimension~$M$. Let $(C_*(\Delta^M),\partial)$ be the simplicial chain complex of~$\Delta^M$ with integral coefficients. This means that $C_k(\Delta^M)$ is the free Abelian group generated by oriented $k$-faces of~$\Delta^M$, and $\partial\colon C_k(\Delta^M)\to C_{k-1}(\Delta^M)$ is the boundary operator. A chain $Z\in C_k(\Delta^M)$ is called a \textit{cycle} if $\partial Z=0$. The \textit{support} of a chain~$Z$ is the subcomplex $\supp(Z)\subset\Delta^M$ consisting of all $k$-simplices that enter~$Z$ with non-zero coefficients, and all their faces.

\begin{defin}\label{def_ph2}
 A \textit{simplicial polyhedron\/}  is a pair $(Z,P)$, where $Z$ is an $(n-1)$-cycle and $P\colon |\supp(Z)|\to\R^n$ is a mapping whose restriction to every simplex of~$|\supp(Z)|$ is linear. We say that the polyhedron~$(Z,P)$ has \textit{combinatorial type\/}~$Z$.
  \textit{Edges\/} of the polyhedron~$(Z,P)$ are the images of edges of~$|\supp(Z)|$ under~$P$.
\end{defin}

The \textit{generalized oriented volume\/}~$V_Z(P)$ of a polyhedron~$(Z,P)$ is defined by 
$
\int_{\R^n}\lambda(x)\,dV
$, where $\lambda(x)$ is the algebraic intersection index of a curve from $x$ to infinity and the singular cycle $P(Z)$. Equivalently, for an arbitrary point $O\in\R^n$,
$$
V_Z(P)=\sum_{i=1}^qc_iV\left(O,P\bigl(v_1^{(i)}\bigr),\ldots,P\bigl(v_n^{(i)}\bigr)\right),
$$
where $Z=\sum_{i=1}^qc_i\{v_1^{(i)},\ldots,v_n^{(i)}\}$, $c_i\in\Z$.

\begin{theorem}\label{theorem_main2}
Suppose, $n\ge 3$. Let $Z$ be an $(n-1)$-cycle. Then there exists a monic polynomial relation
$$
W^{2N}+b_1(\ell)W^{2N-2}+\dots+b_N(\ell)=0
$$
that holds for all polyhedra $P\colon |\supp(Z)|\to\R^n$ of combinatorial type~$Z$. Here  $W=2^{[\frac{n}{2}]}\,n!\,V_Z(P)$,  and
 $b_i(\ell)$ are polynomials with integral coefficients in the squares of edge lengths of~$P$. 
\end{theorem}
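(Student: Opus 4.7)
The plan is to reduce Theorem~\ref{theorem_main2} to Theorem~\ref{theorem_main} by decomposing $Z$ as a signed sum of fundamental classes of strongly connected oriented pseudo-manifolds, and then combining the resulting monic relations by means of resultants.

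The key combinatorial step is the following decomposition. Given $Z=\sum_i c_i\sigma_i\in C_{n-1}(\Delta^M)$ with $\partial Z=0$, I would construct oriented strongly connected $(n-1)$-pseudo-manifolds $K_1,\dots,K_r$, integers $m_j\in\Z$, and simplicial maps $f_j\colon K_j\to\supp(Z)$, each bijective on every $(n-1)$-simplex of $K_j$, such that
$$
V_Z(P)=\sum_{j=1}^{r}m_j\,V(P\circ f_j)
$$
for every $P\colon|\supp(Z)|\to\R^n$. The construction takes $|c_i|$ labelled copies of $\sigma_i$, oriented by $\operatorname{sgn}(c_i)$; at every $(n-2)$-simplex $\tau$ the vanishing of $\partial Z$ forces the number of incident copies inducing one of the two orientations on $\tau$ to coincide with the number inducing the opposite one, so these copies can be paired. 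Gluing paired copies along $\tau$ produces an abstract $(n-1)$-pseudo-manifold whose strongly connected components are the $K_j$, and the canonical map sending each copy to its original is the required $f_j$.

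With the decomposition in hand, Theorem~\ref{theorem_main} applied to each $(K_j,P\circ f_j)$ yields a monic polynomial relation
$$
W_j^{2N_j}+b_1^{(j)}(\ell)\,W_j^{2N_j-2}+\dots+b_{N_j}^{(j)}(\ell)=0,
$$
where $W_j=2^{[n/2]}\,n!\,V(P\circ f_j)$. Since $f_j$ is bijective on every $(n-1)$-simplex (and hence on every edge), the squared edge lengths of $P\circ f_j$ are squared edge lengths of $P$, so the coefficients $b_i^{(j)}$ lie in $\Z[\ell]$. Thus each $W_j$, and hence $W=\sum_j m_jW_j=2^{[n/2]}\,n!\,V_Z(P)$, is integral over $\Z[\ell]$; iterating the resultant construction in $\Z[\ell]$ produces an explicit monic polynomial $G(W,\ell)\in\Z[\ell][W]$ with $G(W,\ell)=0$. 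Finally, $\pm G(W,\ell)\,G(-W,\ell)$ is a monic integer-coefficient polynomial in $W$ involving only even powers of $W$, exactly the relation claimed.

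The main obstacle is the combinatorial step: one must check that the glued-up object is a bona fide abstract simplicial complex, i.e.\ that no two distinct $(n-1)$-simplices of $K_j$ accidentally acquire the same vertex set through the chain of identifications. Copies of different underlying simplices of $\supp(Z)$ remain distinguished after gluing, so the only potential problem is between two copies of the \emph{same} $\sigma_i$; this is avoided by choosing the pairings along $(n-2)$-faces carefully or, in pathological cases, by first replacing $Z$ with a subdivided homologous cycle. Once this combinatorial step is in place, the additivity of $V_Z$ in $Z$, the iterated resultant, and the parity cleanup are direct formal analogues of arguments already recalled in the introduction, in particular of the even-parity trick $F(V,\ell)F(-V,\ell)$ explained right after Theorem~\ref{theorem_Sabitov}.
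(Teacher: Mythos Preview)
Your route is quite different from the paper's, and it has a real gap in the place you yourself flag.

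\textbf{How the paper argues.} The paper does \emph{not} reduce Theorem~\ref{theorem_main2} to Theorem~\ref{theorem_main}; the implication goes the other way. It reformulates Theorem~\ref{theorem_main2} algebraically as Theorem~\ref{theorem_alg} (that $W_Z$ is integral over $R_K$) and proves this directly via the place criterion: for any place $\varphi$ finite on $R_K$ one has $K\subset K_\varphi$, the Main Lemma gives $H_{n-1}(K_\varphi)=0$, so $Z=\partial Y$ for some $Y\in C_n(K_\varphi)$, and then $W_Z=\sum c_iW_{\partial\Delta_i}$ is $\varphi$-finite by Cayley--Menger. Nothing in this uses that $Z$ is a fundamental class; the argument treats an arbitrary $(n-1)$-cycle from the outset, so your decomposition step is simply unnecessary in the paper's framework, and Theorem~\ref{theorem_main} is recovered as the special case $Z=[K]$.

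\textbf{The gap in your reduction.} Even granting Theorem~\ref{theorem_main} as a black box, the combinatorial decomposition is not settled. The gluing you describe yields, in general, only a $\Delta$-complex pseudo-manifold: two copies of the same $\sigma_i$ can, through a chain of identifications along $(n-2)$-faces, acquire the same vertex set, and ``choosing the pairings carefully'' is asserted, not proved. Your fallback, subdivision, actually breaks the integer-coefficient conclusion. Any subdivision that inserts new vertices as affine combinations of old ones (barycentric or otherwise) produces new squared edge lengths lying only in $\Q[\ell]$, e.g.\ $\ell_{uv}/4$ for a midpoint. Theorem~\ref{theorem_main} applied to the subdivided $K_j$ then gives $W_j$ integral over $\Z[\ell_{\text{new}}]\subset\Q[\ell]$, hence $W$ integral over $\Q[\ell]$; but this does \emph{not} imply integrality over $\Z[\ell]$, since a place whose residue field has characteristic $2$ is finite on every $\ell_{uv}$ yet infinite on $\ell_{uv}/4$. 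So after subdivision you obtain only the rational-coefficient version of the relation, strictly weaker than Theorem~\ref{theorem_main2}. If instead you add genuinely new free vertices, their squared distances to old vertices are new variables not lying in $\Z[\ell]$ at all, and eliminating them is a separate problem.

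In short: the resultant/parity part of your argument is fine, but the decomposition step is where the content lies, and neither of your proposed fixes closes it without weakening the conclusion.
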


Obviously, Definition~\ref{def_ph1} is contained in Definition~\ref{def_ph2}. Indeed, every polyhedron of combinatorial type~$K$ for an oriented $(n-1)$-dimensional  pseudo-manifold~$K$ is a polyhedron of combinatorial type~$[K]$, where $[K]$ is the fundamental cycle of~$K$. Hence Theorem~\ref{theorem_main} follows from Theorem~\ref{theorem_main2}.

In fact, an analogue of Theorem~\ref{theorem_main} holds for a wider class of polyhedra than simplicial polyhedra, namely, for all polyhedra with triangular $2$-faces.
To formulate this result rigorously, we need to give a rigorous definition of a not necessarily simplicial polyhedron. We prefer to use the following standard geometric definition, which does not include any degenerate or self-intersected examples (cf.~\cite{Ede95}). (Actually, a more general definition in spirit of Definitions~\ref{def_ph1} and~\ref{def_ph2} could also be given, but it would make our considerations less clear.) 

An $n$-\textit{polyhedron\/} is a subset $P\subset\R^n$ with connected interior that can be obtained as the union of finitely many $n$-dimensional convex polytopes. (We only require the existence of such convex polytopes. No particular convex polytopes are supposed to be chosen.)

The \textit{point figure\/} $\pf(p)$ of a point $p\in P$ is the set consisting of all points $q\in\R^n$ such that the point $(1-\varepsilon)p+\varepsilon q$ belongs to~$P$ for all sufficiently small positive $\varepsilon$.
For each point $p\in P$, consider the set of all points $q\in P$ such that $\pf(q)=\pf(p)$ and choose the connected component of this set containing~$p$. The closure of this connected component is called a \textit{face\/} of~$P$.  In particular, $P$ is a face of itself. All other faces of~$P$ are called \textit{proper\/}. It is easy to check that 
\begin{itemize}
\item $P$ has finitely many faces each of which is a $k$-polyhedron in a certain $k$-plane in~$\R^n$.
\item The boundary of every face is the union of faces of smaller dimensions.
\item If $F_1$ and $F_2$ are distinct faces of the same dimension, then the intersection  $F_1\cap F_2$ either is empty or is the union of faces of smaller dimensions.
\end{itemize}
Notice, that it is possible that the relative interiors of faces are not disjoint. Zero-dimensional and one-dimensional faces are called \textit{vertices\/} and \textit{edges\/} respectively.
  
Let $\F$ be the set of all faces of~$P$. We introduce a partial ordering on~$\F$ in the following way. We put $F_1<F_2$ if and only if there is a sequence of faces $G_1=F_1$, $G_2$, $\ldots$, $G_k=F_2$, $k\ge 2$, such that $\dim G_{i+1}=\dim G_{i}+1$ and $G_i\subset\partial G_{i+1}$. Notice that $F_1\not< F_2$ if the relative interior of~$F_1$ is contained in the relative interior of~$F_2$. The poset~$\F$ is called the \textit{face poset\/} of~$P$. We shall say that $P$  has  \textit{combinatorial type\/}~$\F$.

\begin{theorem}\label{theorem_main_ns}
Suppose, $n\ge 3$. Let $\F$ be a finite poset that can be realized as the face poset of an $n$-polyhedron with triangular $2$-faces. Then there exists a monic polynomial relation
\begin{equation}
\label{eq_rel_ns}
W^{2N}+b_1(\ell)W^{2N-2}+\dots+b_N(\ell)=0
\end{equation}
that holds for all $n$-polyhedra $P$ of combinatorial type~$\F$. Here  $W$ is the volume of~$P$ multiplied by~$2^{[\frac{n}{2}]}n!$,  and $b_i(\ell)$ are polynomials with integral coefficients in the squares of edge lengths of~$P$. 
\end{theorem}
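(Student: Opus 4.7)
The plan is to reduce Theorem~\ref{theorem_main_ns} to Theorem~\ref{theorem_main2} by abstractly triangulating the boundary of $P$ into $(n-1)$-simplices using only the vertices of $P$, and then eliminating the squared lengths of the resulting diagonals. Since every 2-face of $P$ is already a triangle, only $k$-faces with $k\ge 3$ require subdivision. I would fix a linear order on the vertices of $\F$ and perform an iterated pulling triangulation: for every $k$-face $F$ with $k\ge 3$ (processed in order of increasing dimension), take its smallest vertex $v_0(F)$ and cone it over the previously constructed triangulations of the $(k-1)$-faces of $F$ not containing $v_0(F)$. Summing with appropriate orientations yields an abstract simplicial $(n-1)$-cycle $Z=Z(\F)$ depending only on $\F$, whose vertex set is that of $\F$ and whose $1$-skeleton is the union of the edges of $P$ and a finite collection of \emph{diagonals} (pairs of vertices of $P$ that are not edges of $\F$).

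For any polyhedron $P$ of combinatorial type $\F$, the vertex map of $P$ extends linearly to a simplicial polyhedron $(Z,P')$ in the sense of Definition~\ref{def_ph2}; since $Z$ is a combinatorial subdivision of $\partial P$ representing the same fundamental class, one has $V_Z(P')=V(P)$ even when some $k$-faces of $P$ are non-convex (the abstract pulling chain may self-overlap geometrically, but its winding function agrees with that of $\partial P$). Applying Theorem~\ref{theorem_main2} to $(Z,P')$ produces a monic polynomial relation
\begin{equation*}
W^{2N'}+\tilde b_1(\ell,\ell_d)\,W^{2N'-2}+\dots+\tilde b_{N'}(\ell,\ell_d)=0
\end{equation*}
with $\tilde b_i\in\Z[\ell,\ell_d]$, where $\ell$ collects the squared edge lengths of $P$ and $\ell_d=\{d_1,\dots,d_r\}$ the squared lengths of the diagonals.

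To remove the diagonals, I would establish, for each $d_j$, a monic polynomial relation
\begin{equation*}
d_j^{M_j}+c_{j,1}(\ell)\,d_j^{M_j-1}+\dots+c_{j,M_j}(\ell)=0
\end{equation*}
with $c_{j,i}\in\Z[\ell]$, valid for all polyhedra of type $\F$. Granted these, iterated resultants of the previous display against the $d_j$-annihilators eliminate $d_1,\dots,d_r$ one at a time, preserving both monicity in $W$ and integrality of the coefficients in the remaining variables, and deliver a relation of exactly the form~\eqref{eq_rel_ns}.

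The hardest step is obtaining these integer-coefficient monic annihilators for the diagonals. A soft argument handles the algebraic side: along any edge path in $P$ joining the two endpoints of $d_j$, the triangle inequality gives $\sqrt{d_j}\le\sum\sqrt{\ell_i}$, so $d_j$ stays finite at every place of the function field of the configuration space at which the squared edge lengths are finite; this already yields algebraic integrality over the integral closure of $\Z[\ell]$. Upgrading this to a \emph{monic} polynomial relation with integer-coefficient polynomial coefficients is the technically demanding part, and I expect it to be carried out by running the same place-theoretic machinery (and Main Lemma) underlying Theorem~\ref{theorem_main2}, with squared diagonal lengths playing the role previously occupied by $W$. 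Equivalently, one proves once and for all the strengthening of Theorem~\ref{theorem_main2} asserting that every squared pairwise vertex distance of a simplicial polyhedron, not only the squared volume, satisfies a monic polynomial relation over $\Z[\ell]$, and invokes that result in the elimination step.
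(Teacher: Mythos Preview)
Your plan---fix one pulling triangulation of $\partial P$, apply Theorem~\ref{theorem_main2}, then eliminate the extra squared diagonal lengths by resultants---is the natural ``Sabitov-style'' reduction, but its decisive step is not justified, and it is precisely the step the paper's proof is designed to avoid.

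The gap is your assertion that each squared diagonal length $d_j$ satisfies a monic relation $d_j^{M_j}+c_{j,1}(\ell)d_j^{M_j-1}+\cdots=0$ with $c_{j,i}\in\Z[\ell]$. Your ``soft argument'' via the triangle inequality is not valid in the universal setting: integrality over $R_K$ must be checked for \emph{all} places of $\Q(\x_{\V})$, not just real specializations, and for a place $\varphi$ into a field of arbitrary characteristic the inequality $\sqrt{d_j}\le\sum\sqrt{\ell_i}$ has no meaning. In place-theoretic terms you would need that $K\subset K_\varphi$ forces every diagonal $\{u,w\}$ of your fixed triangulation to lie in $K_\varphi$; but the Main~Lemma only gives a homological statement ($K_\varphi$ collapses to dimension $\le[n/2]$, hence $H_{n-1}(K_\varphi)=0$), and says nothing about membership of any \emph{particular} $1$-simplex in~$K_\varphi$. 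Your proposed ``strengthening of Theorem~\ref{theorem_main2}'' (all squared vertex distances integral over $\Z[\ell]$) is therefore an independent and harder theorem, not a corollary of the machinery at hand.

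The paper sidesteps this entirely. Rather than fixing a triangulation and then proving its diagonals are integral, it lets the triangulation depend on the place: for each $\varphi$ finite on $R_{\F}$, the generalization of the Main~Lemma (Lemma~\ref{lem_gm}) gives $H_{k-1}(K_\varphi\cap\Delta^{\V_F})=0$ for every $k$-face $F$ with $k\ge 3$, which allows one to build a generalized triangulation $\{Y_F\}$ inductively so that $\supp(Y_F)\subset K_\varphi\cap\Delta^{\V_F}$. Thus the only diagonals ever used are, by construction, $\varphi$-finite, and the Cayley--Menger formula finishes the proof. No elimination is needed and no statement about integrality of individual diagonals is required.
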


For non-simplicial polyhedra, the Bellows Conjecture says that for each deformation $P_t$, $t\in [0,1]$, such that the combinatorial type of~$P_t$ does not change and all proper faces of~$P_t$ remain congruent to the corresponding faces of~$P_0$, the volume of~$P_t$ is constant. Theorem~\ref{theorem_main_ns} allows us to prove the following result, which is stronger then the Bellows Conjecture.

\begin{cor}\label{cor_flex_ns}
Let $P_t$, $t\in [0,1]$, be a continuous deformation of an $n$-polyhedron such that the combinatorial type of~$P_t$ does not change and all $2$-dimensional faces of~$P_t$ remain congruent to the corresponding faces of~$P_0$. Then the volume of~$P_t$ is constant.
\end{cor}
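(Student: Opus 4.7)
The plan is to deduce Corollary~\ref{cor_flex_ns} from Theorem~\ref{theorem_main_ns} by first treating the case where all 2-faces are triangles, and then reducing the general case to that one by a perturbation.

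\textbf{Triangular 2-faces.} Because the combinatorial type of $P_t$ is fixed, if the 2-faces of $P_0$ are all triangles, then so are those of every $P_t$. Every edge of $P_t$ lies in some triangular 2-face, and congruence of 2-faces preserves all distances between their vertices; hence the squared edge lengths of $P_t$ are independent of $t$. Theorem~\ref{theorem_main_ns} then asserts that $W(t):=2^{[n/2]}n!\,V(P_t)$ satisfies the monic relation~\eqref{eq_rel_ns} with coefficients $b_i(\ell)$ that do not depend on $t$. Therefore $W(t)$ lies in the fixed finite root set of this polynomial, and continuity of $W(t)$ in $t$ forces it to be constant, so $V(P_t)$ is constant.

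\textbf{Reduction of the general case.} For each non-triangular 2-face $F$ of $P_0$, fix a triangulation of $F$ into triangles using only vertices of $F$. By the 2-face congruence, the same combinatorial triangulation transports to $F_t$ for every $t$, and the squared lengths of the added diagonals are also independent of $t$. For a small parameter $\epsilon>0$, perturb $P_t$ to an $n$-polyhedron $\tilde P_t^\epsilon$ with triangular 2-faces by bending each 2-face of $P_t$ along its chosen diagonals by an angle of order $\epsilon$, keeping all vertices of $P_t$ in place and adjusting the adjacent higher-dimensional faces locally. Then $\tilde P_t^\epsilon$ has combinatorial type independent of $t$; its edges are the edges of $P_t$ together with the chosen diagonals; and all its squared edge lengths are constant in $t$. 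The triangular case, applied to $\tilde P_t^\epsilon$, shows that $V(\tilde P_t^\epsilon)$ is constant in $t$ for each fixed $\epsilon$. Letting $\epsilon\to 0$ yields $V(\tilde P_t^\epsilon)\to V(P_t)$, and so $V(P_t)$ is constant as well.

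\textbf{Main obstacle.} The essential difficulty lies in the perturbation step: given an $n$-polyhedron $P$ and a triangulation of each of its 2-faces into triangles using only existing vertices, one must construct an $n$-polyhedron $\tilde P^\epsilon$ close to $P$ whose canonical (point-figure) 2-faces are exactly the prescribed triangles. This requires bending each 2-face along its diagonals while compatibly adjusting every higher-dimensional face containing it, and each such higher face may have several of its 2-faces bent at once. Verifying that a consistent bending exists, depends continuously on $t$ and $\epsilon$, and yields an honest $n$-polyhedron of the intended combinatorial type is the principal technical point; once this is in place, the rest of the argument proceeds by the citation of Theorem~\ref{theorem_main_ns} and a continuity-plus-finiteness argument exactly as in Corollary~\ref{cor_Sabitov}.
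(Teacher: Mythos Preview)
Your proposal has a genuine gap, and you yourself identify it: the perturbation step is not proved. The difficulty is real. In dimension $n\ge 4$ a $2$-face has codimension at least $2$; ``bending'' it along a diagonal forces every higher-dimensional face containing it out of its affine span, and these adjustments must be made compatibly across all faces simultaneously while preserving the point-figure combinatorics and avoiding self-intersection. You give no construction, and it is not at all clear that one exists in general (let alone one continuous in $t$ and $\epsilon$). Since this is the entire content of the reduction, the proof as written is incomplete.

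The paper avoids this geometric reduction entirely. Instead of reducing to Theorem~\ref{theorem_main_ns}, it revisits the \emph{proof} of Theorem~\ref{theorem_alg_ns} and observes that the hypothesis ``all $2$-faces are triangles'' is used only once: to guarantee that the chains $Y_F$ for $F\in\F_2$ can be chosen inside $K_\varphi$. If, in addition to the edge lengths, one also assumes the place $\varphi$ is finite on the squares of the \emph{diagonal} lengths of $2$-faces, then any triangulation of each $2$-face by its own vertices lies in $K_\varphi$, and the inductive construction of the $Y_F$ goes through unchanged. This yields, for an arbitrary combinatorial type $\F$, a monic relation
\[
W^{2N}+b_1(\ell,d)W^{2N-2}+\dots+b_N(\ell,d)=0
\]
whose coefficients are polynomials in the squared edge lengths \emph{and} the squared diagonal lengths of the $2$-faces. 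Since congruence of $2$-faces fixes both, the finiteness-plus-continuity argument applies directly. No perturbation is needed.
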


Let us discuss main ideas of our proofs of Theorems~\ref{theorem_main}, \ref{theorem_main2}, and~\ref{theorem_main_ns}. Until now there existed two different proofs of Theorem~\ref{theorem_Sabitov} (i.\,e. Theorem~\ref{theorem_main} in dimension~$3$), Sabitov's original proof (see~\cite{Sab96}, \cite{Sab98}) and the proof due to Connelly, Sabitov, and Walz~\cite{CSW97}. Both proofs use the same induction on certain parameters of the polyhedron (genus, number of vertices, etc.). In the proof due to Sabitov, the induction step is based on a special procedure for elimination of diagonal lengths of the polyhedron by means of resultants. In the proof due to Connelly, Sabitov, and Walz this complicated elimination procedure is replaced with the usage of theory of places. A \textit{place\/} is a mapping~$\varphi\colon E\to F\cup\{\infty\}$ that satisfies certain special conditions, where $E$ and $F$ are fields  (cf. section~\ref{section_places}). Theory of places is applied as follows. For $E$ we take the field of rational functions in the coordinates of vertices of the polyhedron. Now a relation of the form~\eqref{eq_Sabitov} exists if and only if every place of~$E$ that is finite on the squares of edge lengths of the polyhedron is also finite on its volume. In~\cite{Gai11}, the author generalized both proofs to dimension~$4$. The induction became more delicate, and an additional result of algebraic geometry was used.

In the present paper our approach is different. Instead of proceeding by induction on some parameters of the polyhedron, we study more carefully the properties of places of the field~$E$. Namely, for every place~$\varphi$, we study the  graph consisting of all edges and diagonals of the polyhedron whose squares of lengths have finite $\varphi$-values. We prove that the clique simplicial complex of this graph collapses on a subcomplex of dimension not greater than~$\frac{n}{2}$.  This implies that the $(n-1)$-dimensional homology group of this complex vanishes, which allows us to prove Theorems \ref{theorem_main2} and~\ref{theorem_main_ns}. Notice that our proof is quite elementary modulo theory of places. In particular, unlike the proof in~\cite{Gai11}, it does not use any results of algebraic geometry.

This paper is organized as follows. In section~\ref{section_alg} we give a convenient algebraic reformulation of Theorem~\ref{theorem_main2} (Theorem~\ref{theorem_alg}). The same reformulation was used by Connelly, Sabitov, and Walz in
dimension~$3$, and by the author in dimension~$4$ (see \cite{CSW97},~\cite{Gai11}). In sections~\ref{section_places} and~\ref{section_CM} we recall two main tools of our proof, theory of places and the Cayley--Menger determinants respectively. In section~\ref{section_scheme} we  give the scheme of proof of Theorem~\ref{theorem_alg}. We formulate a lemma, which we call Main Lemma, and derive Theorem~\ref{theorem_alg} from this lemma. The proof of Main Lemma is contained in sections~\ref{section_ord} and~\ref{section_proof_lemma}. In section~\ref{section_complex} we apply Theorem~\ref{theorem_main2} to obtain estimates for the volumes of polyhedra in~$\C^n$. Section~\ref{section_triang} is devoted to non-simplicial polyhedra with triangular $2$-faces, and contains the proofs of Theorem~\ref{theorem_main_ns} and Corollary~\ref{cor_flex_ns}.

The author is grateful to I.\,I.\,Bogdanov, S.\,O.\,Gorchinsky, I.\,Pak, and I.\,Kh.\,Sabitov for useful discussions.

\section{Algebraic reformulation}\label{section_alg}

In this paper, a ring is always a commutative ring with unity.

Let $Z$ be an  $(n-1)$-cycle, and let $K$ be its support. Let $
\mathcal V$ be the vertex set of~$K$, and let $m$ be the cardinality of~$\mathcal V$. A polyhedron $P\colon |K|\to\R^n$ is uniquely determined by the images $P(v)$ of the vertices $v\in \mathcal{V}$. We denote by $x_{v,1},\ldots,x_{v,n}$ the coordinates of the point~$P(v)$. Now it is convenient to regard $x_{v,i}$, $v\in \V$, $i=1,\ldots,n$, as independent variables, and to interpret them as coordinates on the space of all polyhedra~$P$ of combinatorial type~$K$. We denote by $\x_{\V}$ the set of $mn$ variables~$x_{v,i}$ and by $\Z[\x_{\V}]$ the ring of polynomials in these variables. For any two vertices $u,v\in\V$, we define the \textit{universal square of the distance}  between $u$ and~$v$ to be the element~$\ell_{uv}\in\Z[\x_{\V}]$ given by
$$
\ell_{uv}=\sum_{i=1}^n(x_{u,i}-x_{v,i})^2.
$$
We shall say that $\ell_{uv}$ is the \textit{universal square of the edge length} if $\{u,v\}\in K$, and the \textit{universal square of the diagonal length}  if $\{u,v\}\notin K$.
Let $R_K$ be the subring of~$\Z[\x_{\V}]$ generated by all $\ell_{uv}$ such that $\{u,v\}\in K$.

The \textit{universal oriented volume\/} $V_Z\in\Q[\x_{\V}]$ is defined by 
$$
V_Z=\frac{1}{n!}\sum_{i=1}^qc_i\begin{vmatrix}
x_{v_1^{(i)},1}&\cdots&x_{v_n^{(i)},1}\\
\vdots&\ddots&\vdots\\
x_{v_1^{(i)},n}&\cdots&x_{v_n^{(i)},n}
\end{vmatrix},
$$ 
where $Z=\sum_{i=1}^qc_i\big\{v_1^{(i)},\ldots,v_n^{(i)}\big\}$, $c_i\in\Z$. 

We put $W_Z=2^{[\frac{n}{2}]}\,n!\,V_Z$. Then $W_Z\in\Z[\x_{\V}]$.

Polyhedra $P\colon |K|\to\R^n$ are in one-to-one correspondence with  specialization homomorphisms $\Z[\x_{\V}]\to\R$. Any such specialization homomorphism takes the universal squares of edge and diagonal lengths~$\ell_{uv}$, and the universal oriented volume~$V_Z$ to the squares of edge and diagonal lengths, and the oriented volume of the corresponding polyhedron~$(Z,P)$.

A polynomial relation among the polynomials $\ell_{uv}$, $\{u,v\}\in K$, and $W_K$ holds in~$\Z[\x_{\V}]$ if and only if it holds after substituting any real values for the variables~$x_{v,i}$. 
Hence Theorem~\ref{theorem_main2} is equivalent to the following one.

\begin{theorem}\label{theorem_alg}
Let $Z$ be an $(n-1)$-cycle and let $K$ be its support, $n\ge 3$. Then the element~$W_Z$ is integral over the ring~$R_K$.
\end{theorem}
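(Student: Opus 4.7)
The plan is to apply the place-theoretic criterion for integrality: $W_Z\in\Q(\x_{\V})$ is integral over $R_K$ if and only if every place $\varphi\colon\Q(\x_{\V})\to F\cup\{\infty\}$ that is finite on $R_K$ is also finite on $W_Z$. Thus fix such a place~$\varphi$; the task reduces to showing $\varphi(W_Z)\ne\infty$.

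Associated with $\varphi$ is the following combinatorial gadget. Let $G_\varphi$ be the graph on $\V$ whose edges are exactly the pairs $\{u,v\}$ with $\varphi(\ell_{uv})\in F$, and let $L_\varphi$ be its clique (flag) complex, whose simplices are the sets of vertices pairwise joined in $G_\varphi$. By the hypothesis that $\varphi$ is finite on $R_K$, every edge of $K$ lies in~$G_\varphi$; hence $\supp(Z)\subset L_\varphi$, and $Z$ is an $(n-1)$-cycle in~$L_\varphi$.

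The \emph{Main Lemma}, which is the technical heart of the proof, asserts that $L_\varphi$ admits a sequence of elementary collapses onto a subcomplex of dimension at most~$[n/2]$. Collapses preserve simple-homotopy type, and $n-1>[n/2]$ for every $n\ge 3$, so $H_{n-1}(L_\varphi;\Z)=0$. Hence $Z=\partial C$ for some $n$-chain $C=\sum_j d_j\{u_0^{(j)},\ldots,u_n^{(j)}\}$ in $L_\varphi$. A direct determinant computation (Stokes' identity applied to the cone over $Z$ with apex at an arbitrary point $O\in\R^n$) then gives
$$
W_Z=2^{[\frac{n}{2}]}\,n!\sum_j d_j\,V\bigl(u_0^{(j)},\ldots,u_n^{(j)}\bigr).
$$
Every filling simplex $\sigma_j=\{u_0^{(j)},\ldots,u_n^{(j)}\}$ lies in $L_\varphi$, so all of its $\binom{n+1}{2}$ squared edge lengths are $\varphi$-finite. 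The Cayley--Menger identity, with the precise normalization built into the definition of~$W$, expresses $\bigl(2^{[n/2]}\,n!\,V(\sigma_j)\bigr)^2$ as an integer polynomial in these squared edge lengths, and so $\varphi$ carries it into~$F$. Since $\varphi(x^2)\in F$ forces $\varphi(x)\in F$ for any place, each summand is $\varphi$-finite, and hence so is $\varphi(W_Z)$.

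The genuine obstacle is the Main Lemma. To approach it I would extract from $\varphi$ an ordering of the vertices in $\V$, for instance by ranking them according to the $\varphi$-magnitudes of their mutual coordinate differences, and use this ordering to construct an explicit collapsing scheme for $L_\varphi$. The geometric intuition is that vertices that are pairwise ``close'' in the $\varphi$-sense must, in the appropriate asymptotic sense, span at most an $[n/2]$-dimensional affine subspace, because a near-degenerate Cayley--Menger determinant loses rank. Making this rigorous in the language of places, and orchestrating the collapses so that a free face is always available at every stage while the terminal dimension stays within $[n/2]$, is where I expect the bulk of the technical difficulty to reside, and it is the content of the subsequent sections on the ordering and the proof of the Main Lemma.
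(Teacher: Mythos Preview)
Your proposal is correct and follows essentially the same route as the paper: apply the place criterion, use the Main Lemma to get $H_{n-1}(K_\varphi)=0$, fill $Z$ by an $n$-chain in $K_\varphi$, and invoke the Cayley--Menger identity on each filling simplex. Your closing remarks about building an ordering on vertices from the $\varphi$-magnitudes and using Cayley--Menger degenerations to drive the collapse also anticipate accurately the strategy carried out in the subsequent sections.
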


Denote by $\Q(\x_{\V})$ the field of rational functions in the variables~$x_{v,i}$.
It will be useful for us to consider points $z\in\Q(\x_{\V})^n$ not necessarily coinciding with the vertices $v\in \V$. We shall conveniently define the coordinates of~$z$ by $x_{z,1},\ldots,x_{z,2}$. Here $x_{z,i}$ are elements of~$\Q(\x_{\V})$, which are not supposed to be independent variables. The square of the distance $\ell_{yz}\in\Q(\x_{\V})$ between two points $y,z\in\Q(\x_{\V})^n$ is given by the same formula
\begin{equation}\label{eq_dist}
\ell_{yz}=\sum_{i=1}^n(x_{y,i}-x_{z,i})^2.
\end{equation}

\section{Places}\label{section_places}

The main tool of our proof is theory of places. In dimension~$3$, it was used 
by Connelly, Sabitov, and Walz  to simplify Sabitov's  proof of Theorem~\ref{theorem_Sabitov}.
Let us recall necessary facts on places and valuations (cf. \cite[chapter 1]{Lan72}, \cite[section 41.7]{Pak08}).

Let $E$ be a field. A \textit{place} of~$E$ is a mapping $\varphi\colon E\to F\cup\{\infty\}$ to a field~$F$, with an extra element~$\infty$, such that $\varphi(1)=1$, $\varphi(a+b)=\varphi(a)+\varphi(b)$ and $\varphi(ab)=\varphi(a)\varphi(b)$ whenever the right-hand sides are defined. Here we assume that $1/0=\infty$, $1/\infty=0$, $c\pm\infty=\infty$ for all $c\in F$, and $c\cdot\infty=\infty$ for all $c\in F\cup\{\infty\}$, except for~$0$. The expressions $\infty\pm\infty$, $\infty\cdot 0$, $0/0$, and $\infty/\infty$ are undefined. Elements $c\in F$ are said to be \textit{finite\/}.

By definition, the \textit{valuation ring\/} of~$\varphi$ is the subring $\mathfrak{o}\subset E$ consisting of all $a\in E$ such that $\varphi(a)\ne\infty$. Let $E^*$ be the multiplicative group of the field~$E$, and let $U$ be the group of units of~$\mathfrak{o}$. We put $\Gamma=E^*/U$. Let $\gamma_1,\gamma_2$ be elements of~$\Gamma$ corresponding to cosets $a_1U$ and~$a_2U$ respectively. It is easy to see that if $\gamma_1\ne\gamma_2$, then $\varphi(a_1/a_2)$ is either $0$ or $\infty$. We put $\gamma_1<\gamma_2$ if $\varphi(a_1/a_2)=0$, and $\gamma_1>\gamma_2$ if $\varphi(a_1/a_2)=\infty$. It is easy to see that $<$ is a well-defined ordering on the group~$\Gamma$, in particular, $\gamma_1<\gamma_2$ implies $\alpha\gamma_1<\alpha\gamma_2$ for any $\alpha\in\Gamma$. We add to the ordered group~$\Gamma$ an extra element~$0$, and put $0\cdot\gamma=0$ and $0<\gamma$ for all $\gamma\in\Gamma$.

For any element $a\in E^*$, let $|a|_{\varphi}$ be the coset $aU\in\Gamma$. We put $|0|_{\varphi}=0$. The mapping $E\to\Gamma\cup\{0\}$ given by $a\mapsto|a|_{\varphi}$ is called the \textit{valuation\/} corresponding to the place~$\varphi$, and satisfies the following properties:
\begin{itemize}
\item $|a|_{\varphi}=0$ if and only if $a=0$.
\item $|ab|_{\varphi}=|a|_{\varphi}\cdot|b|_{\varphi}$.
\item $|a+b|_{\varphi}\le \max(|a|_{\varphi},|b|_{\varphi})$.
\item $|a|_{\varphi}<1$ iff $\varphi(a)=0$, $|a|_{\varphi}=1$ iff $\varphi(a)\in F\setminus\{0\}$, and $|a|_{\varphi}>1$ iff $\varphi(a)=\infty$.
\end{itemize}
  
The most important for us fact on places is as follows.
\begin{lem}[cf.~{\cite[p. 12]{Lan72}}]\label{lem_place}
Let $R$ be a ring contained in a field~$E$, and let $a$ be an element of~$E$. Then $a$ is integral over~$R$ if and only if every place~$\varphi$ of~$E$ that is finite on~$R$ is finite on~$a$. 
\end{lem}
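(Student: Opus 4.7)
The plan is to prove the two implications separately; the non-trivial direction relies on the standard extension theorem for valuation rings of a field.

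For the easy direction, suppose $a^n+r_{n-1}a^{n-1}+\dots+r_0=0$ with $r_i\in R$, and let $\varphi$ be a place of~$E$ finite on~$R$, so that $|r_i|_{\varphi}\le 1$ for each~$i$. If we had $|a|_{\varphi}>1$, then applying the ultrametric inequality to the rearranged relation $a^n=-(r_{n-1}a^{n-1}+\dots+r_0)$ would give
$$
|a|_{\varphi}^n=\bigl|a^n\bigr|_{\varphi}\le \max_{0\le i<n}|r_ia^i|_{\varphi}\le |a|_{\varphi}^{n-1},
$$
a contradiction. Hence $|a|_{\varphi}\le 1$, i.e.\ $\varphi(a)$ is finite.

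For the converse I would, given a non-integral~$a$, construct a place of~$E$ that is finite on~$R$ but sends~$a$ to~$\infty$. First consider the subring $R[a^{-1}]\subset E$ and observe that $a^{-1}$ is \emph{not} a unit there: if it were, then $a\in R[a^{-1}]$ would give an expression $a=\sum_{i=0}^{k}r_ia^{-i}$, and multiplying through by~$a^k$ would produce a monic integral relation for~$a$ over~$R$, contrary to assumption. Consequently $a^{-1}$ lies in some maximal ideal $\mathfrak{m}\subset R[a^{-1}]$, and the localization $R[a^{-1}]_{\mathfrak{m}}$ is a local subring of~$E$ whose maximal ideal contains~$a^{-1}$.

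The final (and only non-elementary) step is to invoke the standard extension theorem: every local subring of a field~$E$ is dominated by some valuation ring $\mathfrak{o}\subset E$, proved by Zorn's lemma applied to the poset of local subrings of~$E$ dominating the given one (cf.~\cite[chapter~1]{Lan72}). Applying this to $R[a^{-1}]_{\mathfrak{m}}$ yields a valuation ring $\mathfrak{o}\subset E$ containing~$R$ whose maximal ideal~$\mathfrak{m}_{\mathfrak{o}}$ contains~$a^{-1}$. The associated place $\varphi\colon E\to (\mathfrak{o}/\mathfrak{m}_{\mathfrak{o}})\cup\{\infty\}$ is then finite on~$R$ and satisfies $\varphi(a^{-1})=0$, hence $\varphi(a)=\infty$, as required. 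The main obstacle is precisely this domination step: it is the one point where the elementary valuation-theoretic manipulations must be supplemented by an appeal to Zorn's lemma.
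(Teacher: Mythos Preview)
Your proof is correct and is the standard argument; the paper itself does not prove this lemma but simply cites \cite[p.~12]{Lan72}, so there is no in-paper proof to compare against. Your write-up matches the usual textbook approach (Lang, or equivalently Bourbaki, Atiyah--Macdonald, etc.): the forward direction via the ultrametric inequality, and the converse via the observation that $a^{-1}$ is a non-unit in $R[a^{-1}]$ followed by domination by a valuation ring.
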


\section{Cayley--Menger determinants}\label{section_CM}

\begin{propos}[Cayley~\cite{Cay41}, Menger~\cite{Men28}, \cite{Men31}, cf.~\cite{Pak08}]
Let $v_0,v_1,\ldots,v_n$ be points in a Euclidean space and let $\ell_{v_iv_j}$ be the square of the distance between~$v_i$ and~$v_j$. Then the $n$-dimensional volume~$V$ of the simplex with vertices $v_0,v_1,\ldots,v_n$ satisfy the equation
\begin{equation}\label{eq_CMvol}
V^2=\frac{(-1)^{n+1}}{2^n(n!)^2}CM(v_0,\ldots,v_n),
\end{equation}
where
\begin{equation}\label{eq_CM}
CM(v_0,\ldots,v_n)=\left|
\begin{matrix}
0 & 1 & 1 & 1 & \cdots & 1\\
1 & 0 & \ell_{v_0v_1} & \ell_{v_0v_2} & \cdots & \ell_{v_0v_n}\\
1 & \ell_{v_0v_1} & 0 & \ell_{v_1v_2} & \cdots & \ell_{v_1v_n}\\
1 & \ell_{v_0v_2} & \ell_{v_1v_2} & 0 & \cdots & \ell_{v_2v_n}\\
\vdots & \vdots & \vdots & \vdots & \ddots & \vdots\\
1 & \ell_{v_0v_n} & \ell_{v_1v_n} & \ell_{v_2v_n} & \cdots & 0
\end{matrix}
\right|\ .
\end{equation} 
\end{propos}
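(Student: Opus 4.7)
My plan is to reduce both sides of~\eqref{eq_CMvol} to a common Gram determinant. First I would introduce the edge vectors $e_i=v_i-v_0$ for $i=1,\ldots,n$. Assuming the simplex is non-degenerate (otherwise both sides of~\eqref{eq_CMvol} vanish, so there is nothing to prove), I would work in an orthonormal frame of its affine hull, so that the $e_i$ form the columns of an $n\times n$ matrix $A$ with $n!\,V=\pm\det A$. Squaring gives $(n!)^2V^2=\det G$, where $G$ is the Gram matrix with entries $G_{ij}=\langle e_i,e_j\rangle$.

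Second, I would express $G$ in terms of the squared edge lengths via the polarization identity $\langle e_i,e_j\rangle=\tfrac12(\ell_{v_0v_i}+\ell_{v_0v_j}-\ell_{v_iv_j})$. Factoring $\tfrac12$ from each of the $n$ rows of $G$ then yields
\[
2^n(n!)^2V^2=\det G',\qquad G'_{ij}=\ell_{v_0v_i}+\ell_{v_0v_j}-\ell_{v_iv_j}.
\]

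Third, I would identify $\det G'$ with $(-1)^{n+1}CM(v_0,\ldots,v_n)$ by elementary row and column operations on the matrix in~\eqref{eq_CM}. Concretely, I would subtract the row indexed by $v_0$ from each of the remaining rows indexed by $v_1,\ldots,v_n$, perform the analogous operations on columns, and then expand the resulting matrix along its first row (whose only nonzero entry becomes $1$) and along the first column of the subsequent minor. After these reductions the inner $n\times n$ block is precisely $-G'$, so combining the two cofactor signs with the $(-1)^n$ coming from pulling $-1$ out of each row of the inner block produces the total sign $(-1)^{n+1}$. Substituting into the previous display yields~\eqref{eq_CMvol}.

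I do not expect any conceptual obstacle; the whole argument is linear algebra. The only care needed is in the bookkeeping of the last step, where one must verify that the row and column operations really produce a block of the form $-G'$ (the diagonal entries $-2\ell_{v_0v_i}$ agreeing with $-G'_{ii}$ thanks to $\ell_{v_iv_i}=0$), and that the overall sign and normalization come out to exactly $(-1)^{n+1}$ and $2^n(n!)^2$.
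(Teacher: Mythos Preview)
Your argument is correct and is exactly the standard proof of the Cayley--Menger formula: express $V^2$ via the Gram determinant of the edge vectors, rewrite the Gram entries through polarization, and identify the result with $(-1)^{n+1}CM$ by the row/column operations you describe (your sign and constant bookkeeping checks out). The paper, however, does not supply its own proof of this proposition: it is stated as a classical result with references to Cayley, Menger, and Pak, and is used without proof. So there is nothing to compare against; your write-up simply fills in a standard omitted argument.
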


The determinant $CM(v_0,\ldots,v_n)$ is called the \textit{Cayley--Menger determinant\/} of the points $v_0,\ldots,v_n$.

\begin{cor}\label{cor_CM}
The Cayley--Manger determinant of any affinely dependent set of points vanishes. In particular, the Cayley--Menger determinant of any $n+2$ points in~$\R^n$ vanishes.
\end{cor}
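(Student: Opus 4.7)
The plan is to deduce the corollary immediately from the Cayley--Menger identity \eqref{eq_CMvol} established in the preceding proposition. Suppose I am given $k+1$ affinely dependent points $v_0,v_1,\ldots,v_k$ lying in some Euclidean space. First I would observe that, since the determinant~\eqref{eq_CM} depends on the configuration only through the mutual squared distances $\ell_{v_iv_j}$, I may freely embed the points into any Euclidean space of dimension at least~$k$ without changing the value of $CM(v_0,\ldots,v_k)$.

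Regarded as vertices of a $k$-simplex in such an ambient space, the hypothesis of affine dependence forces the $v_i$ to lie in an affine subspace of dimension at most $k-1$, so the $k$-dimensional volume $V$ of the simplex they span vanishes. Applying \eqref{eq_CMvol} with $n$ replaced by $k$ gives
$$
0\;=\;V^2\;=\;\frac{(-1)^{k+1}}{2^k(k!)^2}\,CM(v_0,\ldots,v_k),
$$
and therefore $CM(v_0,\ldots,v_k)=0$, which is the first assertion. For the second claim I would simply note that any $n+2$ points in $\R^n$ are automatically affinely dependent, because $n+2$ affinely independent points would require the ambient affine space to have dimension at least $n+1$; applying the first assertion with $k=n+1$ then yields the vanishing of the corresponding Cayley--Menger determinant.

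There is no real obstacle here: the corollary is essentially the degenerate limit of the Cayley--Menger formula. The only small point that deserves a sentence of justification is the independence of $CM$ from the choice of ambient Euclidean realisation, which is immediate from the fact that \eqref{eq_CM} is a polynomial in the $\ell_{v_iv_j}$ alone.
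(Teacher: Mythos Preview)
Your proof is correct and is exactly the intended argument: the paper states the corollary without proof precisely because it is the immediate degenerate case of~\eqref{eq_CMvol}. Your remark about the ambient-space independence of $CM$ is a reasonable extra clarification, though strictly speaking the proposition already allows the $v_i$ to sit in any Euclidean space.
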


The relation $CM(v_0,\ldots,v_k)=0$ for affinely dependent points $v_0,\ldots,v_k$ is called the \textit{Cayley--Menger relation\/}.

If $v_0,\ldots,v_n\in\R^n$, formula~\eqref{eq_CMvol} is a polynomial relation among the coordinates of the points~$v_0,\ldots,v_n$, hence this formula holds as well for universal $n$-dimensional points $v_0,\ldots,v_n$, where we substitute for~$\ell_{v_iv_j}$ the universal squares of distances and for $V$ the universal oriented volume of the $n$-simplex with vertices $v_0,\ldots,v_n$. Notice, that in our notation introduced in section~\ref{section_alg} this universal oriented volume is denoted by~$V_{\partial\Delta}$, where $\Delta=\{v_0,\ldots,v_n\}$. Now consider $W_{\partial\Delta}=2^{[\frac{n}{2}]}\, n!\,V_{\partial\Delta}$. If $n$ is even, we have $W_{\partial\Delta}=-CM(v_0,\ldots,v_n)$. Hence $W_{\partial\Delta}$ is integral over the ring $R_{\Delta}$ generated by all universal squares of  edge lengths~$\ell_{v_iv_j}$. If $n$ is odd, we have $W_{\partial\Delta}=\frac12CM(v_0,\ldots,v_n)$. But the determinant of a  symmetric matrix of odd order with zero diagonal is a polynomial in the matrix entries with \textit{even\/} coefficients. Hence $\frac12CM(v_0,\ldots,v_n)$ is the polynomial in~$\ell_{v_iv_j}$ with integral coefficients. Therefore $W_{\partial\Delta}$ is integral over~$R_{\Delta}$. Thus the claim of Theorem~\ref{theorem_alg} holds if $Z=\partial\Delta$.

In fact, it follows easily from formula~\eqref{eq_CMvol} that $kV_{\partial\Delta}$ is not integral over~$R_{\Delta}$ if $k\in\Z$, $0<k<2^{[\frac{n}{2}]}\,n!$. Hence the multiple $2^{[\frac{n}{2}]}\,n!$ in Theorems~\ref{theorem_main}, \ref{theorem_main2}, and~\ref{theorem_main_ns} cannot be decreased.

Notice also that Corollary~\ref{cor_CM} holds for points over any field~$E$, where the squares of the distances~$\ell_{v_iv_j}$ are defined by~\eqref{eq_dist}.

\section{Scheme of proof of Theorem~2.1}\label{section_scheme}

We shall apply Lemma~\ref{lem_place} to prove Theorem~\ref{theorem_alg}. So we take for~$E$, $R$, and~$a$ the field~$\Q(\x_{\V})$, the ring~$R_K$, and the element~$W_Z$ respectively. Theorem~\ref{theorem_alg} will follow, if we prove that every place~$\varphi$ of~$\Q(\x_{\V})$ that is finite on~$R_K$ is finite on~$W_Z$.

Let $\varphi\colon\Q(\x_{\V})\to F\cup\{\infty\}$ be a place, and let $|\cdot|_{\varphi}$ be the corresponding valuation on~$\Q(\x_{\V})$. Let $G_{\varphi}$ be the graph on the vertex set~$\V$ such that an edge $\{u,v\}$ belongs to $G_{\varphi}$ if and only if $\varphi(\ell_{uv})$ is finite. Let $K_{\varphi}$ be the \textit{clique simplicial complex\/} of~$G_{\varphi}$ (or the \textit{full subcomplex\/} spanned by~$G_{\varphi}$). This means that a simplex $\sigma\subset \V$ belongs to $K_{\varphi}$ if and only if all edges of~$\sigma$ belong to~$G_{\varphi}$.

Let $L$ be a simplicial complex. A pair $(\sigma,\tau)$ of simplices of~$L$ is called a \textit{free pair\/} if 
\begin{itemize}
\item $\tau\subset\sigma$, and $\dim\tau=\dim\sigma-1$.
\item $\sigma$ is maximal in~$L$, i.\,e., $\sigma$ is strictly contained in no simplex of~$L$.
\item $\tau$ is a \textit{free face\/} of~$\sigma$, i.\,e., $\tau$ is strictly contained in no simplex of~$L$ distinct from~$\sigma$. 
\end{itemize}
Removing a free pair $(\sigma,\tau)$ from~$L$, we obtain another simplicial complex~$L_1$, which is called an \textit{elementary collapse\/} of~$L$. A sequence of elementary collapses is called a \textit{collapse\/}. If $L$ collapses on $J$, then $L$ is homotopy equivalent to~$J$.

\begin{mlem}
For any place $\varphi\colon \Q(\x_{\V})\to F\cup\{\infty\}$, the simplicial complex~$K_{\varphi}$ collapses on a subcomplex of dimension less than or equal to~$\bigl[\frac{n}{2}\bigr]$.  
\end{mlem}

\begin{cor}\label{cor_main} 
$H_k(K_{\varphi})=0$ for $k>\frac{n}{2}$. In particular, $H_{n-1}(K_{\varphi})=0$ if $n\ge 3$. 
\end{cor}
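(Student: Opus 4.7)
The plan is to derive Corollary~\ref{cor_main} as an immediate consequence of the Main Lemma. First I would invoke the Main Lemma to obtain a subcomplex $J\subset K_\varphi$ with $\dim J\le [n/2]$ onto which $K_\varphi$ collapses. Since the excerpt has already recorded that a collapse is a homotopy equivalence $|K_\varphi|\simeq|J|$, this yields an isomorphism $H_k(K_\varphi)\cong H_k(J)$ for every $k\ge 0$. If one prefers a purely chain-level argument, removing a free pair $(\sigma,\tau)$ identifies $\partial\sigma$ with $\pm\tau$ modulo chains in the remaining complex, so $\{\sigma,\tau\}$ spans an acyclic direct summand of the simplicial chain complex and the inclusion of the collapsed complex is a quasi-isomorphism; iterating through the sequence of elementary collapses gives the same conclusion.

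From this isomorphism the vanishing is immediate: since $\dim J\le [n/2]$, the simplicial chain group $C_k(J)$ vanishes for $k>[n/2]$, whence $H_k(J)=0$ in that range, and the same holds for $K_\varphi$. For integer $k$, the condition $k>n/2$ is equivalent to $k>[n/2]$, which gives the first assertion. The ``in particular'' clause then reduces to the elementary arithmetic check $n-1>[n/2]$ for $n\ge 3$: at $n=3$ this reads $2>1$, while for $n\ge 4$ we have $[n/2]\le n/2\le n-2<n-1$.

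I do not anticipate any substantial obstacle, since the entire content of Corollary~\ref{cor_main} is packed into the Main Lemma, and the rest is a direct application of the homotopy invariance of simplicial homology. The only step requiring brief justification is that a single elementary collapse is a quasi-isomorphism, and this is entirely standard; moreover it is already noted in its homotopy-equivalence form immediately above the statement of the Main Lemma, so no new machinery needs to be introduced at this point.
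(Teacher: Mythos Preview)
Your proposal is correct and matches the paper's approach exactly: the paper states Corollary~\ref{cor_main} without proof, treating it as an immediate consequence of the Main Lemma together with the already-noted fact that a collapse is a homotopy equivalence. Your added arithmetic check that $n-1>[n/2]$ for $n\ge 3$ and the optional chain-level remark are fine elaborations of what the paper leaves implicit.
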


The proof of Main Lemma will be given in sections~\ref{section_ord},~\ref{section_proof_lemma}. Now we shall prove that Theorem~\ref{theorem_alg} follows from Main Lemma.

\begin{proof}[Proof of Theorem~\ref{theorem_alg}]
To prove that $W_Z$ is integral over~$R_K$, we need to show that every place $\varphi\colon \Q(\x_{\V})\to F\cup\{\infty\}$ that is finite on~$R_K$ is also finite on~$W_Z$. Since $\varphi$ is finite on all $\ell_{uv}$ such that $\{u,v\}$ is an edge of~$K$, we see that $K\subset K_{\varphi}$. By Corollary~\ref{cor_main}, we have $H_{n-1}(K_{\varphi})=0$. Hence there exists a chain $Y\in C_n(K_{\varphi})$ such that $\partial Y=Z$. Suppose, $Y=\sum_{i=1}^kc_i\Delta_i$, where $c_i\in\Z$ and $\Delta_i$ are oriented $n$-simplices of~$K_{\varphi}$. Then 
$$
W_Z=\sum_{i=1}^kc_iW_{\partial \Delta_i}.
$$
The Cayley-Menger formula implies that $W_{\partial \Delta_i}$ is integral over the ring $R_{\Delta_i}$ generated by all $\ell_{uv}$ such that $u,v\in\Delta_i$ (cf.~section~\ref{section_CM}). Since $\Delta_i\in K_{\varphi}$, we see that $\varphi$ is finite on~$R_{\Delta_i}$. Hence $\varphi$ is finite on every~$W_{\partial \Delta_i}$. Therefore $\varphi$ is finite on~$W_Z$. 
\end{proof}

\section{Orderings~$\succ$}\label{section_ord}

We denote the set of all $k$-simplices of~$K_{\varphi}$ by $K_{\varphi}^k$. Suppose that for every~$k$, $0\le k\le\dim K_{\varphi}$, we have chosen a (strict) total ordering~$\succ$ on the set~$K^k_{\varphi}$. (It is convenient to denote the orderings on different sets~$K^k_{\varphi}$ by the same symbol~$\succ$.) Later we shall construct the orderings~$\succ$ that  will satisfy certain special conditions.
To formulate these conditions we need to introduce some notation.

A \textit{facet\/} of a simplex is a codimension~$1$ face of it.
For each $\sigma\in K_{\varphi}$
 such that $\dim\sigma>0$, we denote by $\mu(\sigma)$ the maximal  facet of~$\sigma$ with respect to $\succ$.

Suppose $\sigma\in K_{\varphi}$, $\sigma\ne\emptyset$. Denote by $\V_{\sigma}$ the set  consisting of all vertices $v\notin\sigma$ such that $\sigma\cup\{v\}\in K_{\varphi}$ and $\mu(\sigma\cup\{v\})=\sigma$. For each vertex $v\in \V_{\sigma}$, we denote by $\V_{\sigma}(v)$ the subset of~$\V_{\sigma}$ consisting of all vertices~$u$ such that $\sigma\cup\{v\}\succ\sigma\cup\{u\}$.

The first condition imposed on~$\succ$ is as follows. 

\begin{itemize}
\item[(i)]  If $\sigma_1,\sigma_2\in K_{\varphi}$, $\dim\sigma_1=\dim\sigma_2>0$, and  $\mu(\sigma_1)\succ\mu(\sigma_2)$, then $\sigma_1\succ\sigma_2$.
\end{itemize}

Other conditions will be slightly different in the cases of odd and even~$n$. 
If $n$ is odd, the ordering~$\succ$ must satisfy the following condition:

\begin{itemize}
\item[(ii)${}'$] Suppose that $\sigma\in K_{\varphi}$, $\sigma\ne\emptyset$, $v\in \V_{\sigma}$, and $\V_{\sigma}(v)\ne \emptyset$.  Then there is a vertex $u\in \V_{\sigma}(v)$ such that
$$
\varphi\left(\frac{\ell_{w_1w_2}}{\ell_{uv}}\right)\ne\infty
$$
for all $w_1,w_2\in \V_{\sigma}(v)\cup\{v\}$.
\end{itemize}
If $n$ is even, condition~(ii)${}'$ is replaced with the following two conditions:
\begin{itemize}
\item[(ii)${}''$] Suppose that $\sigma\in K_{\varphi}$, $\dim\sigma>0$, $v\in \V_{\sigma}$, and $\V_{\sigma}(v)\ne \emptyset$.  Then there is a vertex $u\in \V_{\sigma}(v)$ such that
$$
\varphi\left(\frac{\ell_{w_1w_2}}{\ell_{uv}}\right)\ne\infty
$$
for all $w_1,w_2\in \V_{\sigma}(v)\cup\{v\}$.
\item[(iii)${}''$] Let $\{u,v\}$ and $\{u,w\}$ be edges of~$K_{\varphi}$ such that $\{u\}\succ\{v\}$, $\{u\}\succ\{w\}$, and $\{u,v\}\succ\{u,w\}$. Then $|x_{u,1}-x_{v,1}|_{\varphi}\ge |x_{u,1}-x_{w,1}|_{\varphi}$.
\end{itemize}

Notice that condition~(ii)${}''$ almost coincide with condition~(ii)${}'$. The only difference is that in~(ii)${}''$ we assume that $\dim\sigma>0$.

\textit{Construction of $\succ$.\/} Now we shall construct the orderings~$\succ$ on the sets~$K_{\varphi}^k$ satisfying conditions~(i), (ii)${}'$ for odd~$n$ and conditions~(i), (ii)${}''$, (iii)${}''$ for even~$n$.

First, we choose an arbitrary total ordering~$\succ$ on the set~$K_{\varphi}^0$ of vertices of~$K_{\varphi}$. 

Second, suppose that $n$ is even and construct the ordering~$\succ$ on the set~$K_{\varphi}^1$ of edges of~$K_{\varphi}$. Let $\{u_1,v_1\}$ and $\{u_2,v_2\}$ be any edges of~$K_{\varphi}$ such that $\{u_1\}\succ\{v_1\}$ and $\{u_2\}\succ\{v_2\}$. We put $\{u_1,v_1\}\succ\{u_2,v_2\}$ whenever $\{u_1\}\succ\{u_2\}$. Then condition~(i) is satisfied. Now for every $u$, we need to order all edges $\{u,v\}$ such that $\{u\}\succ\{v\}$. We put $\{u,v_1\}\succ\{u,v_2\}$ whenever $\{u\}\succ\{v_1\},\{v_2\}$, and $|x_{u,1}-x_{v_1,1}|_{\varphi}>|x_{u,1}-x_{v_2,1}|_{\varphi}$. The edges $\{u,v\}$ with the same maximal vertex~$u$ and the same value of~$|x_{u,1}-x_{v,1}|_{\varphi}$ are ordered arbitrarily.  Then condition~(iii)${}''$ is satisfied.

Further, we proceed by induction on dimension. 
Suppose that we have already determined the ordering~$\succ$ on the set~$K_{\varphi}^{k-1}$ and construct the ordering~$\succ$ on the set~$K_{\varphi}^{k}$. We assume that $k\ge 1$ if $n$ is odd, and $k\ge 2$ if $n$ is even, since the case $k=1$ for $n$ even has been considered separately above.  

For $k$-simplices $\sigma_1,\sigma_2\in K_{\varphi}$, we put $\sigma_1\succ\sigma_2$ whenever $\mu(\sigma_1)\succ\mu(\sigma_2)$. Then condition~(i) is satisfied.
Now, for each $(k-1)$-simplex $\rho\in K_{\varphi}$, we need to order $k$-simplices $\sigma\in K_{\varphi}$ such that $\mu(\sigma)=\rho$. Each such $k$-simplex $\sigma$ has the form $\rho\cup\{v\}$, where $v\in \V_{\rho}$. Let $r$ be the cardinality of~$\V_{\rho}$. 
We shall successively denote the vertices of~$\V_{\rho}$ by $v_1,\ldots,v_r$ in the following way. Suppose that the vertices $v_1,\ldots,v_{i-1}$ have already been chosen, $i<r$. (If $i=r$, the unique vertex in~$\V_{\rho}\setminus\{v_1,\ldots,v_{r-1}\}$ is certainly taken for $v_r$.)
Consider all values $|\ell_{w_1w_2}|_{\varphi}$, where $w_1,w_2\in \V_{\rho}\setminus\{v_1,\ldots,v_{i-1}\}$, and choose the maximum of them. Let the maximum be attained at a pair $(w_1^0,w_2^0)$. Then we take for $v_i$ the vertex~$w_1^0$. Now we put $\rho\cup\{v_i\}\succ\rho\cup\{v_j\}$ whenever $i<j$.

Let us check that the ordering~$\succ$ constructed by the above inductive procedure satisfies condition~(ii)${}'$ or condition~(ii)${}''$ (depending on the evenness of~$n$). Suppose that $v=v_i$ in the above notation for the vertices in~$\V_{\rho}$. Then $\V_{\rho}(v)\cup\{v\}=\V_{\rho}\setminus\{v_1,\ldots,v_{i-1}\}$. Take for~$u$ the vertex $w_2^0$. Then for any  $w_1,w_2\in \V_{\rho}(v)\cup\{v\}$, we have $|\ell_{w_1w_2}|_{\varphi}\le |\ell_{uv}|_{\varphi}$, i.\,e., $\varphi(\ell_{w_1w_2}/\ell_{uv})\ne\infty$.

The following proposition is the main fact on the ordering~$\succ$.

\begin{propos}\label{propos_main}
Let $\sigma$ and $\tau$ be simplices of~$K_{\varphi}$ such that $\dim\sigma=\dim\tau>\frac{n}{2}$ and $\mu(\sigma)=\mu(\tau)$. Then $\sigma\cup\tau\in K_{\varphi}$. 
\end{propos}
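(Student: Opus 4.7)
The case $\sigma=\tau$ is trivial, so assume $\sigma\neq\tau$. Writing $\rho=\mu(\sigma)=\mu(\tau)$, we have $\sigma=\rho\cup\{v\}$ and $\tau=\rho\cup\{v'\}$ for distinct $v,v'\notin\rho$; by symmetry of the hypothesis we may assume $\sigma\succ\tau$, so that $v'\in\V_\rho(v)$. Every edge of $\sigma\cup\tau$ other than $\{v,v'\}$ already lies in $G_\varphi$, so it suffices to prove $\ell_{vv'}\in\mathfrak{o}$.

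Applying condition~(ii)$'$ (for $n$ odd) or~(ii)$''$ (for $n$ even) with $\rho$ in the role of ``$\sigma$'' there (legitimate since $\dim\rho=k-1\ge 1$ for $k=\dim\sigma>n/2$ and $n\ge 3$, and $v'\in\V_\rho(v)$), we obtain $u\in\V_\rho(v)$ such that $\ell_{w_1w_2}/\ell_{uv}\in\mathfrak{o}$ for all $w_1,w_2\in\V_\rho(v)\cup\{v\}$. Put $\lambda=\ell_{uv}$ and $\hat{\ell}_{xy}=\ell_{xy}/\lambda$ for such $x,y$; then $\hat{\ell}_{uv}=1$ and $\ell_{vv'}=\lambda\hat{\ell}_{vv'}$ with $\hat{\ell}_{vv'}\in\mathfrak{o}$, so the task reduces to proving that $\lambda$ is integral over $\mathfrak{o}$.

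The main tool is Corollary~\ref{cor_CM}: the Cayley--Menger determinant of any $n+2$ points in $\R^n$ vanishes identically. Take $S=\rho\cup\{v,v'\}\cup U$, where $U\subseteq\V_\rho(v)\setminus\{v'\}$ has cardinality $n-k$ and contains $u$; the inequality $k>n/2$ gives $n-k<k$, leaving room for such a choice (the boundary case when $\V_\rho(v)$ is too small to supply $U$ is handled by an analogous but simpler argument that invokes fewer auxiliary vertices). Substituting $\ell_{xy}=\lambda\hat{\ell}_{xy}$ for pairs inside the ``big block'' $B=\{v,v'\}\cup U\subseteq\V_\rho(v)\cup\{v\}$ turns $CM(S)=0$ into a polynomial equation $P(\lambda)=0$ of degree $n-k+2$ with all coefficients in~$\mathfrak{o}$. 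Expanding the determinant and splitting the permutations of $\{*\}\cup\rho\cup B$ according to their restriction to $B$ (with a derangement of $B$ and a derangement of $\{*\}\cup\rho$ contributing the top $\lambda$-degree), a direct computation shows that the leading coefficient factors as
\[
P_{n-k+2}=D'\cdot CM(\rho),
\]
where $CM(\rho)$ is the ordinary Cayley--Menger determinant of $\rho$, and $D'$ is the determinant of the $(n-k+2)\times(n-k+2)$ matrix with zero diagonal and off-diagonal entries $\hat{\ell}_{xy}$.

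If $P_{n-k+2}$ is a unit of~$\mathfrak{o}$, then $\lambda$ satisfies a monic polynomial over $\mathfrak{o}$, whence $\lambda\in\mathfrak{o}$ by the integral closedness of the valuation ring. The remaining step, and the heart of the proof, is to verify $\varphi(D'\cdot CM(\rho))\ne 0$. This is where the full strength of the construction of $\succ$ is used: the maximality built into the choice of $u$ makes $\hat{\ell}_{uv}=1$ a unit and keeps $|\hat{\ell}_{xy}|_\varphi\le 1$ throughout $B$, which lets one isolate a non-vanishing minor of the matrix defining $D'$ modulo the maximal ideal; and the hypothesis $\dim\sigma>n/2$ prevents $\rho$ from collapsing under $\varphi$, so $\varphi(CM(\rho))\ne 0$. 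For $n$ even, condition~(iii)$''$ provides the coordinate-valuation estimates needed to exclude exceptional configurations where these non-vanishing arguments would otherwise fail. Assembling these gives $P_{n-k+2}\in\mathfrak{o}^\times$, and hence $\sigma\cup\tau\in K_\varphi$; the chief technical obstacle is precisely this non-vanishing analysis of $D'\cdot CM(\rho)$ under $\varphi$.
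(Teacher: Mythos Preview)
Your proposal has a genuine gap at the step you yourself flag as the heart of the proof: the claim that $\varphi(D'\cdot CM(\rho))\ne 0$. The assertion that ``$\dim\sigma>n/2$ prevents $\rho$ from collapsing under $\varphi$, so $\varphi(CM(\rho))\ne 0$'' is unsupported. All you know about $\rho$ is that its edge-squares $\ell_{pq}$ lie in $\mathfrak{o}$; nothing prevents $\varphi(CM(\rho))=0$. Already for $n=3$, $k=2$ one has $\rho=\{v_0,v_1\}$ and $CM(\rho)=-2\ell_{v_0v_1}$, which can perfectly well be sent to $0$ by $\varphi$. The paper in fact introduces, for even $n$, an auxiliary point $z$ (built using condition~(iii)${}''$) precisely to force $\varphi(\ell_{zv_1})\ne 0$ when $\varphi(\ell_{v_0v_1})=0$; this is exactly the degeneracy your argument does not handle. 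The non-vanishing of $D'$ is equally unproved: knowing $\hat\ell_{uv}=1$ does not control the full determinant of a zero-diagonal symmetric matrix (e.g.\ when $|B|=3$, $D'=2\hat\ell_{uv}\hat\ell_{uv'}\hat\ell_{vv'}$, and $\hat\ell_{uv'}$ may lie in the maximal ideal).

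The paper avoids this entirely by a different mechanism. Instead of applying condition~(ii) once at the top level $\rho=\sigma_{k-1}$, it iterates the chain $\sigma_i=\mu(\sigma_{i+1})$ down to $\sigma_0$, applies condition~(ii) at each level to produce vertices $u_i\in\V_{\sigma_{i-1}}(v_i)$, and then writes the Cayley--Menger relation for $u_1,\ldots,u_r,v_0,\ldots,v_r$ (odd $n$) or $u_2,\ldots,u_r,z,v_1,\ldots,v_r$ (even $n$). After dividing the $u_i$-rows and columns by $h_i=\ell_{u_iv_i}$ (each of which has $\varphi$-value $\infty$ under the contradiction hypothesis), the $\varphi$-image of the matrix has a zero upper-left block and a \emph{unitriangular} off-diagonal block, forcing the determinant to be $\pm 1$---a contradiction obtained without any appeal to $CM(\rho)$. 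This multi-level use of the $u_i$ (and the construction of $z$ via~(iii)${}''$) is the missing idea in your approach.
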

 
\begin{proof}
We may assume that $\sigma\succ\tau$.
Suppose, $\dim\sigma=\dim\tau=k$. We put $\sigma_{k}=\sigma$, and successively put $\sigma_i=\mu(\sigma_{i+1})$, $i=k-1,\ldots,1,0$. 
We denote the vertices of~$\sigma$ by~$v_0,\ldots,v_k$ so that $\sigma_i=\{v_0,\ldots,v_i\}$, $i=0,\ldots,k$. We denote by $u_k$ the vertex of~$\tau$ opposite to the facet $\mu(\tau)=\sigma_{k-1}$; then $\tau=\{v_0,\ldots,v_{k-1},u_k\}$. Since $\sigma\succ\tau$, we have $u_k\in \V_{\sigma_{k-1}}(v_k)$.
We need to prove that $\varphi(\ell_{u_kv_k})\ne\infty$. If we prove this, we will obtain that $\varphi$ is finite on all squares of edge lengths of the simplex~$\sigma\cup\tau$, hence, $\sigma\cup\tau\in K_{\varphi}$. We consider two cases.

\textit{Case 1:  $n$ is odd.\/} Let $n=2r-1$. For $i=1,\ldots,k-1$, consider condition~(ii)${}'$ for the simplex $\sigma_i=\sigma_{i-1}\cup\{v_i\}$. Obviously, the set $\V_{\sigma_{i-1}}(v_i)$ is non-empty, since it contains the vertex~$v_{i+1}$. Hence property~(ii)${}'$ implies that there is a vertex~$u_i\in \V_{\sigma_{i-1}}(v_i)$ such that 
\begin{equation}\label{eq_finite_odd}
\varphi\left(\frac{\ell_{w_1w_2}}{\ell_{u_iv_i}}\right)\ne\infty
\end{equation}
for all $w_1,w_2\in \V_{\sigma_{i-1}}(v_i)\cup\{v_i\}$.
 
Let us prove that $u_j,v_j\in  \V_{\sigma_{i-1}}(v_i)$ whenever $1\le i<j\le k$. Let $w$ be either $u_j$ or~$v_j$. Since $w\in  \V_{\sigma_{j-1}}$, we see that $\sigma_{j-1}\cup\{w\}\in K_{\varphi}$ and $\mu(\sigma_{j-1}\cup\{w\})=\sigma_{j-1}$. Condition~(i) easily implies that $\mu(\sigma_{i-1}\cup\{w\})=\sigma_{i-1}$ and $\mu(\sigma_i\cup\{w\})=\sigma_i$. Hence $w\in \V_{\sigma_{i-1}}$ and $\sigma_i\succ\sigma_{i-1}\cup\{w\}$. Therefore $w\in \V_{\sigma_{i-1}}(v_i)$. Thus~\eqref{eq_finite_odd} implies that 
\begin{equation}\label{eq_finite_odd_concrete}
\varphi\left(\frac{\ell_{u_jv_j}}{\ell_{u_iv_i}}\right)\ne\infty,\quad
\varphi\left(\frac{\ell_{u_iv_j}}{\ell_{u_iv_i}}\right)\ne\infty,\quad
\varphi\left(\frac{\ell_{u_iu_j}}{\ell_{u_iv_i}}\right)\ne\infty
\end{equation}
whenever $1\le i<j\le k$.

Besides, $\varphi$ is finite on $\ell_{v_iv_j}$ and $\ell_{v_iu_j}$ whenever $0\le i<j\le k$, since $\{v_i,v_j\}$ and $\{v_i,u_j\}$ are edges of~$K_{\varphi}$.

Assume that $\varphi(\ell_{u_kv_k})=\infty$. By~\eqref{eq_finite_odd_concrete}, we have $\varphi\left(\frac{\ell_{u_iv_i}}{\ell_{u_kv_k}}\right)\ne 0$ whenever $0<i<k$. Hence, $\varphi(\ell_{u_iv_i})=\infty$ for $i=1,\ldots,k$.

Since $k>\frac{n}{2}$, we have $k\ge r$. Consider the Cayley--Menger relation for the $n+2$ points $u_1,\ldots,u_r,v_0,v_1,\ldots,v_r$. We have
$$\left|\,\,\,\,
\begin{matrix}
0&1&1&\cdots& 1 &  \vline &1 & 1 & 1& \cdots & 1\\
1&0&\ell_{u_1u_2}&\cdots&\ell_{u_1u_r}&\vline & \ell_{u_1v_0} & \ell_{u_1v_1} & \ell_{u_1v_2} &\cdots &\ell_{u_1v_r}\\
1&\ell_{u_1u_2}&0&\cdots&\ell_{u_2u_r}& \vline &\ell_{u_2v_0} & \ell_{u_2v_1} & \ell_{u_2v_2} &\cdots &\ell_{u_2v_r}\\
\vdots &\vdots & \vdots & \ddots &\vdots& \vline &\vdots & \vdots &\vdots & \ddots &\vdots\\
1&\ell_{u_1u_r}&\ell_{u_2u_r}&\cdots&0& \vline &\ell_{u_rv_0} & \ell_{u_rv_1} & \ell_{u_rv_2} &\cdots &\ell_{u_rv_r}\\
\hline
1&\ell_{u_1v_0}&\ell_{u_2v_0}&\cdots& \ell_{u_rv_0}&\vline  &  0 &\ell_{v_0v_1} & \ell_{v_0v_2} &  \cdots & \ell_{v_0v_r}\\
1&\ell_{u_1v_1}&\ell_{u_2v_1}&\cdots& \ell_{u_rv_1} &\vline &  \ell_{v_0v_1} & 0& \ell_{v_1v_2} &  \cdots & \ell_{v_1v_r}\\
1&\ell_{u_1v_2}&\ell_{u_2v_2}&\cdots& \ell_{u_rv_2} &\vline &  \ell_{v_0v_2} & \ell_{v_1v_2} & 0 &  \cdots & \ell_{v_2v_r}\\
\vdots &\vdots & \vdots & \ddots &\vdots& \vline &\vdots & \vdots &\vdots & \ddots &\vdots\\
1&\ell_{u_1v_r}&\ell_{u_2v_r}&\cdots& \ell_{u_rv_r} & \vline & \ell_{v_0v_r} & \ell_{v_1v_r} &\ell_{v_2v_r}  &\cdots & 0
\end{matrix}
\,\,\,\,\right|=0.
$$
For $i=1,\ldots,r$, we divide both the $(r+1)$st row and the $(r+1)$st column of this determinant by $\ell_{u_iv_i}$. To simplify formulae, we put $h_i=\ell_{u_iv_i}$. We obtain
\begin{equation}\label{eq_CM_divided}
\extrarowheight=4pt
\left|\,\,\,\,
\begin{matrix}
0&\frac{1}{h_1}&\frac{1}{h_2}&\cdots& \frac{1}{h_r} &\vline & 1 & 1 & 1& \cdots & 1\\
\frac{1}{h_1}&0&\frac{\ell_{u_1u_2}}{h_1h_2}&\cdots&\frac{\ell_{u_1u_r}}{h_1h_r}& \vline &\frac{\ell_{u_1v_0}}{h_1} & 1 & \frac{\ell_{u_1v_2}}{h_1} &\cdots &\frac{\ell_{u_1v_r}}{h_1}\\
\frac{1}{h_2}&\frac{\ell_{u_1u_2}}{h_1h_2}&0&\cdots&\frac{\ell_{u_2u_r}}{h_2h_r}& \vline & \frac{\ell_{u_2v_0}}{h_2} & \frac{\ell_{u_2v_1}}{h_2} & 1 &\cdots &\frac{\ell_{u_2v_r}}{h_2}\\
\vdots &\vdots & \vdots & \ddots &\vdots& \vline &\vdots & \vdots &\vdots & \ddots &\vdots\\
\frac{1}{h_r}&\frac{\ell_{u_1u_r}}{h_1h_r}&\frac{\ell_{u_2u_r}}{h_2h_r}&\cdots&0& \vline &\frac{\ell_{u_rv_0}}{h_r} & \frac{\ell_{u_rv_1}}{h_r} & \frac{\ell_{u_rv_2}}{h_r} &\cdots &1\vphantom{\Bigl(}\\
\hline
1&\frac{\ell_{u_1v_0}}{h_1}&\frac{\ell_{u_2v_0}}{h_2}&\cdots& \frac{\ell_{u_rv_0}}{h_r} &\vline &  0 &\ell_{v_0v_1} & \ell_{v_0v_2} &  \cdots & \ell_{v_0v_r}\\
1&1&\frac{\ell_{u_2v_1}}{h_2}&\cdots& \frac{\ell_{u_rv_1}}{h_r} & \vline & \ell_{v_0v_1} & 0& \ell_{v_1v_2} &  \cdots & \ell_{v_1v_r}\\
1&\frac{\ell_{u_1v_2}}{h_1}&1&\cdots& \frac{\ell_{u_rv_2}}{h_r} & \vline & \ell_{v_0v_2} & \ell_{v_1v_2} & 0 &  \cdots & \ell_{v_2v_r}\\
\vdots &\vdots & \vdots & \ddots &\vdots& \vline &\vdots & \vdots &\vdots & \ddots &\vdots\\
1&\frac{\ell_{u_1v_r}}{h_1}&\frac{\ell_{u_2v_r}}{h_2}&\cdots& 1 &\vline &  \ell_{v_0v_r} & \ell_{v_1v_r} &\ell_{v_2v_r}  &\cdots & 0
\end{matrix}\,\,\,\,\right|=0.
\end{equation}
Now we apply the place~$\varphi$ to all entries of the determinant. 
Since $\varphi(h_i)=\infty$, we have  $\varphi\left(\frac{1}{h_i}\right)=0$. Inequalities~\eqref{eq_finite_odd_concrete} imply that $\varphi$ is finite on all entries of the determinant. Besides, we have
\begin{align*}
&\varphi\left(\frac{\ell_{u_iu_j}}{h_ih_j}\right)=
\varphi\left(\frac{\ell_{u_iu_j}}{\ell_{u_iv_i}}\right)\varphi\left(\frac{1}{\ell_{u_jv_j}}\right)=0,&&1\le i<j\le r,\\
&\varphi\left(\frac{\ell_{u_jv_i}}{h_j}\right)=\frac{\varphi(\ell_{u_jv_i})}{\varphi(\ell_{u_jv_j})}=0,&&0\le i<j\le r.
\end{align*}
Hence we obtain
\begin{eqnarray*}
\left|\,\,\,\,
\begin{matrix}
\text{{\Huge$0$}} & \vline &
\begin{matrix}
1 &  &  & \\
 & 1 &  \smash{\lefteqn{\hphantom{l}\text{\Huge$*$} }} & \\
   & & \ddots & \\
\smash{\lefteqn{\hphantom{l}\stackrel{\text{\Huge$0$}}{\vphantom{x}} }} &   & & 1
\end{matrix}
\\
\hline
\begin{matrix}
1 &  &  & \\
& 1 &   \smash{\lefteqn{\hphantom{l}\text{\Huge$0$} }}& \\
 & & \ddots &  \\
 \smash{\lefteqn{\hphantom{l}\stackrel{\text{\Huge$*$}}{\vphantom{x}} }} & & & 1
\end{matrix} & \vline
 &
 \text{\Huge$*$}
\end{matrix}\,\,\,\,\right|=0,
\end{eqnarray*}
where $*$ stand for finite elements of~$F$. This is impossible, since the left-hand side is equal to $\pm 1$. This contradiction proves that $\varphi(\ell_{u_kv_k})\ne\infty$. Hence $\sigma\cup\tau\in K_{\varphi}$.

\textit{Case 2: $n$ is even.} Let $n=2r-2$. As in the previous case, condition~(ii)${}''$ implies that there exist vertices~$u_i\in \V_{\sigma_{i-1}}(v_i)$, $i=2,\ldots,k-1$, such that inequalities~\eqref{eq_finite_odd_concrete} hold whenever $2\le i<j\le k$. (However, we cannot find a vertex~$u_1$ with a similar property.) Besides, $\varphi$ is finite on $\ell_{v_iv_j}$ and $\ell_{v_iu_j}$ whenever $i<j$.

Now we introduce a new point $z=(x_{z,1},\ldots,x_{z,n})$, $x_{z,i}\in \Q(\x_{\V})$. 
If  $\varphi(\ell_{v_0v_1})\ne 0$, we put $z=v_0$, i.\,e., $x_{z,i}=x_{v_0,i}$. 

Suppose $\varphi(\ell_{v_0v_1})=0$. Then we put
\begin{align*}
x_{z,1}&=x_{v_0,1}+a,\\
x_{z,i}&=x_{v_0,i},\quad i\ne 1,
\end{align*}
where 
$$
a=\left\{
\begin{aligned}
&\frac{1}{x_{v_0,1}-x_{v_1,1}}&&\text{if $|x_{v_0,1}-x_{v_1,1}|_{\varphi}>1$ and $\ch F\ne 2$},\\
&\hspace{.45mm}x_{v_1,1}-x_{v_0,1}&&\text{if $|x_{v_0,1}-x_{v_1,1}|_{\varphi}=1$ and $\ch F\ne 2$},\\
&\qquad \ \hspace{.65mm} 1&&\text{if $|x_{v_0,1}-x_{v_1,1}|_{\varphi}<1$ or $\ch F=2$}.
\end{aligned}
\right.
$$

\begin{lem}\label{lem_z}
We have $\varphi(\ell_{zv_1})\ne 0$, and $\varphi(\ell_{zw})\ne\infty$ for $w=v_1,\ldots,v_r,u_2,\ldots,u_r$.
\end{lem}
\begin{proof}
If $\varphi(\ell_{v_0v_1})\ne 0$, the assertion of the lemma holds, since $z=v_0$ and $\{v_0,w\}$ are edges of~$K_{\varphi}$, $w=v_1,\ldots,v_r,u_2,\dots,u_r$. 

Suppose,  $\varphi(\ell_{v_0v_1})=0$, $w\in\{v_1,\ldots,v_r,u_2,\dots,u_r\}$. We have $\{v_0\}\succ\{v_1\}$. It follows easily from condition~(i) that  $\{v_0\}\succ\{w\}$, and  if $w\ne v_1$, $\{v_0,v_1\}\succ\{v_0,w\}$. By condition~(iii)${}''$, we obtain that $ |x_{v_0,1}-x_{w,1}|_{\varphi}\le |x_{v_0,1}-x_{v_1,1}|_{\varphi}$.

We have
$$
\ell_{zw}=\ell_{v_0w}+2a(x_{v_0,1}-x_{w,1})+a^2.
$$

Suppose $|x_{v_0,1}-x_{v_1,1}|_{\varphi}>1$ and $\ch F\ne 2$. Then $\varphi(a)=0$. Hence
$$
\varphi(\ell_{zw})=\varphi(\ell_{v_0w})+2\varphi\left(\frac{x_{v_0,1}-x_{w,1}}{x_{v_0,1}-x_{v_1,1}}\right)\ne\infty.
$$ 
Besides, $\varphi(\ell_{zv_1})=2\ne 0$.

Suppose $|x_{v_0,1}-x_{v_1,1}|_{\varphi}=1$ and $\ch F\ne 2$. Then $|x_{v_0,1}-x_{w,1}|_{\varphi}\le 1$. Hence $\varphi(x_{v_0,1}-x_{v_1,1})$ is neither~$0$ nor~$\infty$, and $\varphi(x_{v_0,1}-x_{w,1})\ne \infty$. Therefore,
\begin{gather*}
\varphi(\ell_{zw})=\varphi\left(
\ell_{v_0w}+2(x_{v_1,1}-x_{v_0,1})(x_{v_0,1}-x_{w,1})+(x_{v_1,1}-x_{v_0,1})^2\right)\ne\infty,\\
\varphi(\ell_{zv_1})=-\varphi(x_{v_0,1}-x_{v_1,1})^2\ne 0.
\end{gather*}

Suppose either $|x_{v_0,1}-x_{v_1,1}|_{\varphi}<1$ or $\ch F=2$. Then $\varphi(2(x_{v_0,1}-x_{w,1}))=0$. Hence $\varphi(\ell_{zw})=\varphi(\ell_{v_0w})+1\ne\infty$,
and  $\varphi(\ell_{zv_1})=1\ne 0$.
\end{proof}

Suppose that $\varphi(\ell_{u_kv_k})=\infty$. Then it follows from~\eqref{eq_finite_odd_concrete} that $\varphi(\ell_{u_iv_i})=\infty$, $i=2,\dots,k-1$.

Since $k>\frac{n}{2}$, we have $k\ge r$. Consider the Cayley--Menger relation for the $n+2$ points $u_2,\ldots,u_r,z,v_1,\ldots,v_r$. Dividing both the $i$th row and the $i$th column of this Cayley--Menger determinant by $h_i=\ell_{u_iv_i}$, $i=2,\ldots,r$, we obtain the following analogue of equation~\eqref{eq_CM_divided}.
\begin{equation*}
\extrarowheight=4pt
\left|\,\,\,\,\begin{matrix}
0&\frac{1}{h_2}&\cdots& \frac{1}{h_r} &\vline & 1 &\vline& 1 & 1& \cdots & 1\\
\frac{1}{h_2}&0&\cdots&\frac{\ell_{u_2u_r}}{h_2h_r}& \vline & \frac{\ell_{zu_2}}{h_2} &\vline& \frac{\ell_{u_2v_1}}{h_2} & 1 &\cdots &\frac{\ell_{u_2v_r}}{h_2}\\
\vdots &\vdots & \ddots &\vdots& \vline &\vdots&\vline & \vdots &\vdots & \ddots &\vdots\\
\frac{1}{h_r}&\frac{\ell_{u_2u_r}}{h_2h_r}&\cdots&0& \vline &\frac{\ell_{zu_r}}{h_r}&\vline & \frac{\ell_{u_rv_1}}{h_r} & \frac{\ell_{u_rv_2}}{h_r} &\cdots &1\vphantom{\Bigl(}\\
\hline
1&\frac{\ell_{zu_2}}{h_2}&\cdots& \frac{\ell_{zu_r}}{h_r} &\vline &  0 &\vline&\ell_{zv_1} & \ell_{zv_2} &  \cdots & \ell_{zv_r}\vphantom{\Bigl(}\\
\hline
1&\frac{\ell_{u_2v_1}}{h_2}&\cdots& \frac{\ell_{u_rv_1}}{h_r} & \vline & \ell_{zv_1} &\vline& 0& \ell_{v_1v_2} &  \cdots & \ell_{v_1v_r}\\
1&1&\cdots& \frac{\ell_{u_rv_2}}{h_r} & \vline & \ell_{zv_2} &\vline& \ell_{v_1v_2} & 0 &  \cdots & \ell_{v_2v_r}\\
\vdots & \vdots & \ddots &\vdots& \vline &\vdots &\vline& \vdots &\vdots & \ddots &\vdots\\
1&\frac{\ell_{u_2v_r}}{h_2}&\cdots& 1 &\vline &  \ell_{zv_r} &\vline& \ell_{v_1v_r} &\ell_{v_2v_r}  &\cdots & 0
\end{matrix}\,\,\,\,
\right|=0.
\end{equation*}
Now we subtract the $(r+2)$nd row from the $(r+1)$st row, and subtract the $(r+2)$nd column from the $(r+1)$st column. We obtain  
\begin{equation}\label{eq_CM_divided_ev}
\extrarowheight=4pt
\left|\,\,\,\begin{matrix}
0&\frac{1}{h_2}&\cdots& \frac{1}{h_r} &\vline & 0 & \vline & 1 & 1& \cdots & 1\\
\frac{1}{h_2}&0&\cdots&\frac{\ell_{u_2u_r}}{h_2h_r}& \vline & p_2 &\vline & \frac{\ell_{u_2v_1}}{h_2} & 1 &\cdots &\frac{\ell_{u_2v_r}}{h_2}\\
\vdots &\vdots & \ddots &\vdots& \vline &\vdots &\vline & \vdots &\vdots & \ddots &\vdots\\
\frac{1}{h_r}&\frac{\ell_{u_2u_r}}{h_2h_r}&\cdots&0& \vline &p_r &\vline & \frac{\ell_{u_rv_1}}{h_r} & \frac{\ell_{u_rv_2}}{h_r} &\cdots &1\vphantom{\Bigl(}\\
\hline
0&p_2&\cdots& p_r &\vline &  -2\ell_{zv_1} & \vline &\ell_{zv_1} & q_2 &  \cdots & q_r\vphantom{\Bigl(}\\
\hline
1&\frac{\ell_{u_2v_1}}{h_2}&\cdots& \frac{\ell_{u_rv_1}}{h_r} & \vline & \ell_{zv_1}  & \vline & 0& \ell_{v_1v_2} &  \cdots & \ell_{v_1v_r}\\
1&1&\cdots& \frac{\ell_{u_rv_2}}{h_r} & \vline & q_2 & \vline & \ell_{v_1v_2} & 0 &  \cdots & \ell_{v_2v_r}\\
\vdots & \vdots & \ddots &\vdots& \vline &\vdots &\vline & \vdots &\vdots & \ddots &\vdots\\
1&\frac{\ell_{u_2v_r}}{h_2}&\cdots& 1 &\vline &  q_r&\vline & \ell_{v_1v_r} &\ell_{v_2v_r}  &\cdots & 0
\end{matrix}\,\,\,
\right|=0,
\end{equation}
where $p_i=\frac{\ell_{zu_i}-\ell_{u_iv_1}}{h_i}$, $q_i=\ell_{zv_i}-\ell_{v_1v_i}$. It easily follows from Lemma~\ref{lem_z} that $\varphi(p_i)=0$ and $\varphi(q_i)\ne\infty$, $i=2,\ldots,r$. As in the case of odd~$n$, we also obtain that $\varphi$ is finite on all other entries of the determinant in the left-hand side of~\eqref{eq_CM_divided_ev}, and $\varphi$ is $0$ on $\frac{1}{h_i}$, $\frac{\ell_{u_iu_j}}{h_ih_j}$, and $\frac{\ell_{u_jv_i}}{h_j}$ whenever $i<j$. 

Denote by $A$ the matrix in the left-hand side of~\eqref{eq_CM_divided_ev}. Let $B$ be the matrix $A$ with the $(r+1)$st row and the $(r+1)$st column deleted. Let $A_0$ be the matrix obtained from $A$ by replacing the central entry~$-2\ell_{zv_1}$ with~$0$. Then~\eqref{eq_CM_divided_ev} can be rewritten as
\begin{equation}\label{eq_AB}
\ell_{zv_1}\det B-\frac12\det A_0=0.
\end{equation}
Now we apply $\varphi$ to the both sides of this equation. First, $\varphi(\ell_{zv_1})\ne 0$ by Lemma~\ref{lem_z}. Second, $\varphi(B)$ is a symmetric $2r\times 2r$-matrix with finite entries such that the upper-left $r\times r$-submatrix of~$\varphi(B)$ vanishes, and the upper-right $r\times r$-submatrix of~$\varphi(B)$ is upper unitriangular. Hence $\varphi(\det(B))=\pm 1$. Finally, $A_0$ is a symmetric $(2r+1)\times (2r+1)$-matrix with zero diagonal. Hence $\frac12\det A_0$ is a polynomial with integral coefficients in the entries of~$A_0$  such that it contains exactly one of every two summands corresponding to  mutually inverse permutations in $S_{2r+1}$ without fixed points.
Hence $\varphi(\frac12\det A_0)$ is the same polynomial in the entries of~$\varphi(A_0)$. All entries of~$\varphi(A_0)$ are finite, and the upper-left $(r+1)\times(r+1)$-submatrix of~$\varphi(A_0)$ vanishes. Therefore, $\varphi(\frac12\det A_0)=0$, which contradicts equation~\eqref{eq_AB}. 

Thus $\varphi(\ell_{u_kv_k})\ne\infty$. Therefore $\sigma\cup\tau\in K_{\varphi}$.
\end{proof} 
 
\section{Proof of Main Lemma}\label{section_proof_lemma} 

Let $d$ be the dimension of~$K_{\varphi}$.
We apply to $K_{\varphi}$ the following sequence of elementary collapses. Let 
$$
\sigma^d_1\succ\sigma^d_2\succ\dots\succ\sigma^d_{s_d}
$$
be all $d$-simplices of $K_{\varphi}$. We successively collapse the pairs $(\sigma^d_1,\mu(\sigma^d_1))$,  $(\sigma^d_2,\mu(\sigma^d_2))$, $\ldots$, $(\sigma^d_{s_d},\mu(\sigma^d_{s_d}))$. Let $K_{d-1}$ be the simplicial complex obtained from~$K_{\varphi}$ by all these collapses. Let
$$
\sigma^{d-1}_1\succ\sigma^{d-1}_2\succ\dots\succ\sigma^{d-1}_{s_{d-1}}
$$
be all $(d-1)$-simplices of $K_{d-1}$. We successively collapse the pairs $(\sigma^{d-1}_1,\mu(\sigma^{d-1}_1))$,  $(\sigma^{d-1}_2,\mu(\sigma^{d-1}_2)), \ldots$, $(\sigma^{d-1}_{s_{d-1}},\mu(\sigma^{d-1}_{s_{d-1}}))$. Further we similarly successively collapse all $(d-2)$-simplices of the obtained simplicial complex~$K_{d-2}$, then all $(d-3)$-simplices of the obtained simplicial complex~$K_{d-3}$, and so on, until the dimension of the complex becomes less than or equal to~$\left[\frac{n}{2}\right]$. 
 
We need to prove that all these collapses are well defined. Assume the converse. Let $\sigma^k$ be the first simplex in the described sequence such that the pair $(\sigma^k,\mu(\sigma^k))$ cannot be collapsed, i.\,e., is not free in the subcomplex $L\subset K_{\varphi}$ obtained by all previous collapses. We have $k>\frac{n}{2}$. 

Since $(\sigma^k,\mu(\sigma^k))$ is not a free pair in~$L$ and $\dim L=k$, we see that there is a $k$-simplex $\tau^k\in L$ such that $\tau^k\ne\sigma^k$ and $\tau^k\supset\mu(\sigma^k)$. Since $\tau^k\in L$, we have $\sigma^k\succ\tau^k$. Now property~(i) of the ordering~$\succ$ easily implies that $\mu(\tau^k)=\mu(\sigma^k)$. Since $k> \frac{n}{2}$, it follows from Propostition~\ref{propos_main} that $\sigma^k\cup\tau^k\in K_{\varphi}$. 
Now property~(i) implies that $\mu(\sigma^k\cup\tau^k)=\sigma^k$. We see that there is at least one simplex $\eta^{k+1}\in K_{\varphi}$ such that $\mu(\eta^{k+1})=\sigma^k$. Among all such simplices $\eta^{k+1}$ we choose the simplex~$\eta^{k+1}_{\min}$ that is minimal with respect to~$\succ$. For each simplex $\xi^{k+2}\in K_{\varphi}$ such that $\xi^{k+2}\supset\eta^{k+1}_{\min}$, the face of~$\xi^{k+2}$ that contains  $\sigma^k$ and is distinct from~$\eta^{k+1}_{\min}$ is greater than $\eta^{k+1}_{\min}$ with respect to~$\succ$. Therefore $\mu(\xi^{k+2})\ne\eta^{k+1}_{\min}$. We obtain that no pair $(\xi^{k+2},\eta^{k+1}_{\min})$ has been collapsed. Hence the pair $(\eta^{k+1}_{\min},\sigma^k)$ has been collapsed. Therefore $\sigma^k\notin L$.  This contradiction proves that all described collapses are well defined, which completes the proof of Main Lemma.

\section{Estimates for volumes of complex polyhedra}\label{section_complex}

Let $(Z,P)$ be a simplicial polyhedron in~$\R^n$, where $Z$ is an $(n-1)$-cycle on the vertex set~$\V$. Recall that the mapping $P\colon\supp(Z)\to\R^n$ is uniquely determined by the images of the vertices of the polyhedron. Polyhedra of combinatorial type~$Z$ are in one-to-one correspondence with specialization homomorphisms $\Z[\x_{\V}]\to\R$.

Consider the complexification of the definition of a simplicial polyhedron. This means that we replace the mapping $P\colon|\supp(Z)|\to\R^n$ with a mapping $P\colon |\supp(Z)|\to\C^n$.
Such polyhedra~$(Z,P)$ will be called \textit{complex simplicial polyhedra\/} of combinatorial type~$Z$. The squares of the edge lengths $\ell_{uv}(P)$ and the oriented volume~$V_Z(P)$ are given by the same polynomials in coordinates of vertices as for real polyhedra of the same combinatorial type.  Notice that this means that $\ell_{uv}(P)$ are the squares of the \textit{orthogonal\/} lengths of edges of~$P$ given by
$$
\ell_{uv}(P)=\sum_{i=1}^n(x_{u,i}(P)-x_{v,i}(P))^2.
$$ 
 They do not coincide with the squares of the \textit{Hermitian} edge lengths given by
$$
h_{uv}(P)=\sum_{i=1}^n|x_{u,i}(P)-x_{v,i}(P)|^2.
$$ 
Here $x_{u,i}(P)$ and $x_{v,i}(P)$ are the coordinates of the points~$P(u)$ and $P(v)$ respectively. Obviously, we have $|\ell_{uv}(P)|\le h_{uv}(P)$. However, $h_{uv}(P)$ may be arbitrarily large for bounded~$\ell_{uv}(P)$.

Let $Z=\sum_{i=1}^qc_i\Delta_i$, where $c_i\in\Z$ and $\Delta_i$ are distinct oriented $(n-1)$-simplices. We put $c_{\max}=\max_{i=1,\dots,q}|c_i|$ and $c_{\Sigma}=\sum_{i=1}^q|c_i|$. Let $m$ be the number of vertices of~$\supp(Z)$.

First, let $(Z,P)$ be a real simplicial polyhedron. Let $d$ be the diameter of~$P(|\supp(Z)|$.  The oriented volume~$V_Z(P)$ can be easily written as the sum of~$c_{\Sigma}$ oriented volumes of simplices each of diameter not greater than~$d$. The absolute value of each such volume of a simplex does not exceed $\frac{d^n}{n!}$. The diameter~$d$ does not exceed the maximal edge length multiplied by~$m$. Hence we have
\begin{equation}\label{eq_est_real}
|V_Z(P)|\le \frac{c_{\Sigma} m^n}{n!}\left(\max_{\{u,v\}\in\supp(Z)}\ell_{uv}(P)\right)^{n/2}.
\end{equation}
In the same way, for a complex simplicial polyhedron~$(Z,P)$, we obtain the estimate
\begin{equation}\label{eq_est_Herm}
|V_Z(P)|\le \frac{c_{\Sigma} m^n}{n!}\left(\max_{\{u,v\}\in\supp(Z)}h_{uv}(P)\right)^{n/2}.
\end{equation} 
In fact, the constants in estimates~\eqref{eq_est_real} and~\eqref{eq_est_Herm} can be easily improved. We do not want to care about them.

A natural question is as follows. Can we obtain an analogue of estimate~\eqref{eq_est_Herm} (with some multiplicative constant~$C_Z$ depending on~$Z$) with the squares of the Hermitian edge lengths~$h_{uv}(P)$ replaced by the squares of the orthogonal edge lengths~$\ell_{uv}(P)$? This question is non-trivial, since we cannot estimate the squares of the  orthogonal lengths of diagonals from  the squares of the orthogonal lengths of edges. Another explanation why this question is non-trivial is that the quotient of the set of all complex polyhedra of the fixed combinatorial type and bounded orthogonal edge lengths by the action of $O(n,\C)$ may be non-compact. Moreover, in dimension~$2$ the answer is negative. Indeed, consider the quadrangle in~$\C^2$ with consecutive vertices $(0,0)$, $(1,-i)$, $(2,0)$, $(1,i)$. The squares of the  orthogonal lengths of all its edges vanish, while the oriented area is equal to~$2i$.

However, we can derive from Theorem~\ref{theorem_main2} the existence of the required estimate in dimensions $n\ge 3$. 

\begin{cor}\label{cor_est}
For any complex simplicial polyhedron~$(Z,P)$ in~$\C^n$, $n\ge 3$, we have 
\begin{equation*}
|V_Z(P)|\le C_Z\left(\max_{\{u,v\}\in\supp(Z)}|\ell_{uv}(P)|\right)^{n/2},
\end{equation*}
where the constant~$C_Z$ depends on the cycle~$Z$ only.
\end{cor}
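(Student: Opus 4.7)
The plan is to derive the estimate directly from Theorem~\ref{theorem_main2}, or rather from its algebraic reformulation Theorem~\ref{theorem_alg}. That reformulation asserts that $W_Z$ is integral over $R_K$ as an element of $\Z[\x_\V]$; hence the monic polynomial relation
$$
W^{2N} + b_1(\ell)W^{2N-2} + \dots + b_N(\ell) = 0
$$
holds as an identity of polynomials in $\Z[\x_\V]$, and thus survives any \emph{complex} specialization of the coordinates. Consequently, every complex simplicial polyhedron of combinatorial type~$Z$ satisfies the very same monic equation in $W = 2^{[n/2]}\,n!\,V_Z(P)$ with coefficients that are polynomials in the squares of its orthogonal edge lengths.

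Next I would homogenize the relation. Since $W_Z$ is homogeneous of degree~$n$ and each $\ell_{uv}$ is homogeneous of degree~$2$ in the variables $x_{v,i}$, extracting the degree-$2nN$ component of the identity above yields an equally valid monic relation in which each $b_i$ is a polynomial in the $\ell_{uv}$ homogeneous of degree~$ni$. Writing $L = \max_{\{u,v\} \in \supp(Z)} |\ell_{uv}(P)|$ and letting $M_i$ denote the sum of absolute values of the integer coefficients of this homogeneous $b_i$, the bound $|b_i(\ell(P))| \le M_i L^{ni}$ is then immediate for every complex polyhedron $(Z,P)$.

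Substituting these bounds into the polynomial identity gives
$$
|W|^{2N} \le \sum_{i=1}^N M_i L^{ni}\, |W|^{2N-2i}.
$$
If $L = 0$ this forces $W = 0$, so $V_Z(P) = 0$ and the estimate is trivial. Otherwise, dividing by $L^{nN}$ and setting $t = |W|/L^{n/2}$ produces a polynomial inequality $t^{2N} \le \sum_{i=1}^N M_i t^{2N-2i}$ whose coefficients depend only on~$Z$. A one-line case analysis ($t \le 1$ versus $t > 1$) gives $t \le \max\bigl(1,\sqrt{\textstyle\sum_i M_i}\bigr)$. Dividing this uniform bound on $|W|/L^{n/2}$ by $2^{[n/2]}\,n!$ supplies the desired constant~$C_Z$.

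I do not anticipate a serious obstacle: the argument is purely homogeneity combined with the standard fact that the roots of a monic polynomial are bounded in terms of its coefficients. The hypothesis $n \ge 3$ enters only through the application of Theorem~\ref{theorem_main2}, and the quadrangle example in~$\C^2$ given just before the corollary shows that no estimate in terms of the $|\ell_{uv}(P)|$ alone can hold in dimension~$2$; this confirms that the monic polynomial relation is the essential input.
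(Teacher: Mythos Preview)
Your proposal is correct and follows essentially the same approach as the paper: use the monic relation from Theorem~\ref{theorem_main2} (equivalently Theorem~\ref{theorem_alg}), exploit the homogeneity of $V_Z$ and $\ell_{uv}$ in the vertex coordinates, and conclude via the standard bound on roots of monic polynomials. The paper phrases the homogeneity step as a normalization (reduce to $|\ell_{uv}(P)|\le 1$ and then bound $|V_Z(P)|$), whereas you extract the top-degree homogeneous component of the relation and scale by $L^{n/2}$; these are equivalent, and your version has the minor advantage of making the constant explicit.
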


\begin{proof}
Since $V_Z(P)$ and $\ell_{uv}(P)$ are homogeneous functions in the coordinates of vertices of~$P$ of degrees~$n$ and $2$ respectively, it is sufficient to prove that $|V_Z(P)|$ is bounded by some positive constant~$C_Z$ if $|\ell_{uv}(P)|\le 1$ for all $\{u,v\}\in\supp(Z)$. 
It follows from Theorem~\ref{theorem_main2} that $V_Z(P)$ is a root of an equation of the form
$$
V^{2N}+a_1(\ell)V^{2N-2}+\cdots+a_N(\ell)=0,
$$
where $a_i$ are polynomials in the squares of the orthogonal edge lengths of the polyhedron. The corollary follows, since the roots of a monic polynomial with fixed degree and bounded coefficients are bounded.
 \end{proof}

Notice, however, that we cannot give any reasonable estimate for the constant~$C_Z$. In both~\eqref{eq_est_real} and~\eqref{eq_est_Herm} the multiplicative constant does not exceed some polynomial in~$m$ multiplied by~$c_{\max}$. The following question seems to be interesting.
\begin{quest}
Does there exist an estimate of the form $C_Z\le \lambda_nc_{\max}m^{s_n}$, where the constants~$\lambda_n$ and~$s_n$ depend on the dimension~$n$ only? If yes, what is the minimal possible value of~$s_n$? 
\end{quest}

\begin{remark}
In fact, Theorem~\ref{theorem_main2} can be deduced from Corollary~\ref{cor_est}. However, we do not know any proof of Corollary~\ref{cor_est} that does not use Theorem~\ref{theorem_main2}.
\end{remark}

\section{Polyhedra with triangular $2$-faces}\label{section_triang} 

In this section we prove Theorem~\ref{theorem_main_ns}.   

Let $\F_k$, $k=1,\ldots,n$, be the subset of~$\F$ consisting of all $k$-dimensional faces. (Notice that the dimension of a face $F\in\F$ is the same for all polyhedra of combinatorial type~$\F$, since it coincides with the maximal length~$k$ of a chain $F_0< \dots< F_k=F$ in~$\F$.) The set~$\V=\F_0$ is the set of vertices. For each $F\in\F$, we denote by~$\V_F$ the set of all vertices $v\in\V$ such that $v<F$. (This does not agree with the notation used in section~\ref{section_ord}.)

In our notation, the symbol~$P$ has two meanings.  
First, $P$ is a polyhedron of combinatorial type~$\F$. Second, $P$ denotes the maximal element of the poset~$\F$. When we consider different polyhedra of the same combinatorial type, this may lead to a confusion. So we change the notation and denote the maximal element of~$\F$ by~$Q$.

If the polyhedron~$P$ is not simplicial, the coordinates $x_{v,i}$ of its vertices are no longer independent. Let~$A_{\F}$ be the commutative ring with unity given by generators $x_{v,i}$, $v\in\V$, $i=1,\ldots,n$, and relations
\begin{equation}\label{eq_det}
\begin{vmatrix}
x_{v_1,i_1}-x_{v_0,i_1}&  \ldots & x_{v_{k+1},i_1}-x_{v_0,i_1}\\
\vdots&\ddots&\vdots\\
x_{v_1,i_{k+1}}-x_{v_0,i_{k+1}}&  \ldots & x_{v_{k+1},i_{k+1}}-x_{v_0,i_{k+1}}
\end{vmatrix}=0
\end{equation}
such that $v_0,\ldots,v_k\in \V_F$ for some $F\in\F_k$, and $1\le i_1<\dots<i_{k+1}\le n$. 
In other words, $A_{\F}$ is the quotient of the ring~$\Z[\x_{\V}]$ by the ideal~$\mathfrak{a}$ generated by the left-hand sides of relations~\eqref{eq_det}.
The \textit{universal squares of the  distances\/} $\ell_{uv}$, $u,v\in\V$, are defined by usual formulae $\ell_{uv}=\sum_{i=1}^n(x_{u,i}-x_{v,i})^2$.
Let $R_{\F}\subset A_{\F}$ be  the subring generated by all $\ell_{uv}$ such that $u$ and $v$ are joined by an edge, i.\,e., there is an $F\in\F_1$ such that $u<F$ and $v<F$.

We denote by~$\Delta^{\V}$ the abstract simplex with the vertex set~$\V$, and we denote by~$\Delta^{\V_F}$ the face of~$\Delta^{\V}$ spanned by~$\V_F$. By definition, the \textit{universal oriented volume\/} of an oriented $n$-face $\Delta=\{v_0,\ldots,v_n\}$ of~$\Delta^{\V}$ is
$$
V_{\Delta}=\frac{1}{n!}
\begin{vmatrix}
x_{v_1,1}-x_{v_0,1}&  \ldots & x_{v_{n},1}-x_{v_0,1}\\
\vdots&\ddots&\vdots\\
x_{v_1,n}-x_{v_0,n}&  \ldots & x_{v_{n},n}-x_{v_0,n}
\end{vmatrix}.
$$ 
For each $n$-chain $Y=\sum_{i=1}^qc_i\Delta_i$, we put $V_Y=\sum_{i=1}^qc_iV_{\Delta_i}$ and $W_Y=2^{[\frac{n}{2}]}n!V_Y$. We have $V_Y\in A_{\F}\otimes\Q$ and $W_Y\in A_{\F}$. Obviously, $W_{\partial X}=0$ if $X\in C_{n+1}(\Delta^{\V})$. If all vertices $v_0,\ldots,v_n$ of~$\Delta$ are contained in~$\V_F$ for  some~$F\in\F_{n-1}$, then $W_{\Delta}=0$ in~$A_{\F}$. Such simplices~$\Delta$ will be called \textit{degenerate\/}. The introduced notation differs from the notation used in the previous sections. Indeed, instead of the oriented volume~$V_Z$ \textit{bounded\/} by an $(n-1)$-cycle~$Z$, we now consider the oriented volume~$V_Y$ \textit{of\/} an $n$-chain~$Y$. 

Every polyhedron~$P$ of combinatorial type~$\F$ yields the specialization homomorphism $A_{\F}\to\R$ that takes $x_{v,i}$ to the coordinates of the vertices of~$P$, $\ell_{uv}$ to the squares of the edge and diagonal  lengths  of~$P$, and $V_{\Delta}$ to the oriented volumes of the corresponding geometric simplices in~$\R^n$.

Now, we want to construct an element of $A_{\F}$ that will serve as the universal oriented volume (multiplied by~$2^{[\frac{n}{2}]}n!$) of a polyhedron of combinatorial type~$\F$. The difficulty consists in the fact that the volume of~$P$ is not a polynomial in the coordinates of vertices of~$P$. Indeed, even the volume of a convex polyhedron is not a polynomial in the coordinates of vertices. To obtain a polynomial, we need to consider the oriented volume. For non-convex polyhedra the situation with orientations is more delicate.    
For instance, the two polyhedra in Figure~\ref{fig_2pol} are combinatorially equivalent, but they cannot be taken to each other by an either orientation-preserving or orientation-reversing homeomorphism that maps the faces of the first polyhedron onto the corresponding faces of the second polyhedron.

\begin{figure}
\begin{center}
\includegraphics[scale=.04]{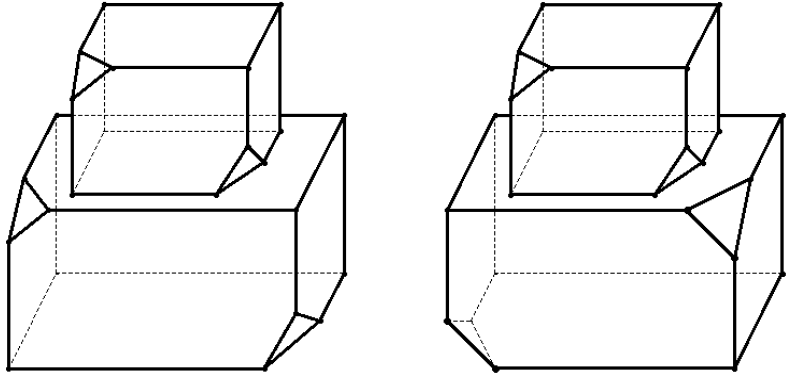}
\end{center}
\caption{Two combinatorially equivalent polyhedra}\label{fig_2pol}
\end{figure}

Consider an $n$-polyhedron~$P\subset\R^n$ of combinatorial type~$\F$. Choose arbitrarily orientations~$\mathcal{O}_F$ of all positive-dimensional faces of~$P$. (Each vertex of~$P$ is supposed to be positively oriented.) The set of orientations~$\mathcal{O}_F$ is called an \textit{omni-orientation\/} of~$P$.
Now for each two faces  $F$ and $G$ such that $\dim F=\dim G+1$ and $G\subset\partial F$, we have the incidence coefficient $\varepsilon_{F,G}$ which is equal to~$1$ if  the orientation~$\mathcal{O}_G$ coincides with the orientation of~$G$  induced by the orientation~$\mathcal{O}_F$ of~$F$, and is equal to~$-1$ if these two orientations of~$G$ are opposite to each other. For convenience, we put $\varepsilon_{F,G}=0$ if $\dim F=\dim G+1$ and  $G\not\subset\partial  F$.
For any faces $F$ and $H$ such that $\dim F=\dim H+2$, we have
\begin{equation}\label{eq_epsilon}
\sum_{G\in\F_{k-1}}\varepsilon_{F,G}\varepsilon_{G,H}=0.
\end{equation}
We shall say that the omni-oriented polyhedron~$P$ has \textit{omni-oriented combinatorial type\/}~$(\F,\{\varepsilon_{F,G}\})$.
 
The oriented volume~$V(P)$ depends only on the orientation of the maximal face of~$P$. Reversing the orientation of any other face~$G\in\F_k$, we simultaneously multiply by~$-1$ all incidence coefficients~$\varepsilon_{F,G}$, $F\in\F_{k+1}$, and ~$\varepsilon_{G,H}$, $H\in\F_{k-1}$. Two sets of numbers~$\{\varepsilon_{F,G}\}$ and~$\{\varepsilon'_{F,G}\}$ are said to be \textit{equivalent\/} if they can be taken to each other by such operations. The equivalence class~$\E$ of the set~$\{\varepsilon_{F,G}\}$ depends only on the orientation of~$P$ and is independent on the orientations of all other faces of~$P$. The pair~$(\F,\E)$ will be called  \textit{oriented combinatorial type\/} of the oriented polyhedron~$P$.

For any pair $(\F,\E)$ that can be obtained as an oriented combinatorial type of an oriented polyhedron, we shall construct an element $W_{\F,\E}\in A_{\F}$, which can be interpreted as the \textit{universal oriented volume\/} (multiplied by $2^{[\frac{n}{2}]}n!$) of an oriented polyhedron of oriented combinatorial type~$(\F,\E)$. Then we shall check that for any oriented polyhedron~$P$ of  combinatorial type~$(\F,\E)$ the corresponding specialization homomorphism $A_{\F}\to\R$ takes $W_{\F,\E}$ to~$W(P)=2^{[\frac{n}{2}]}n!V(P)$.

To compute the volume of a polyhedron it is useful to consider its triangulation. A convex polyhedron can be triangulated without adding new vertices. Moreover, we can successively choose triangulations~$T_F$ of faces~$F$ so that $T_F$ restricts to $T_G$ whenever $G\subset\partial F$. For non-convex polyhedra it is convenient to introduce the following concept of  \textit{generalized triangulation\/}. Let $\{\varepsilon_{F,G}\}$ be an arbitrary set of numbers in the equivalence class~$\E$. A generalized triangulation of a (universal) omni-oriented polyhedron of omni-oriented combinatorial type~$(\F,\{\varepsilon_{F,G}\})$ is a set of chains $Y_F\in C_{\dim F}(\Delta^{\V_F})$, $F\in\F$, such that $Y_v=\{v\}$ for every vertex $v\in\V=\F_0$, and 
\begin{equation}\label{eq_YF}
\partial Y_F=\sum_{G\in\F_{\dim F-1}}\varepsilon_{F,G}Y_G
\end{equation} 
whenever $\dim F>0$. It is easy to show that such generalized triangulations exist. Indeed,
equations~\eqref{eq_epsilon} provide that  the right-hand side of~\eqref{eq_YF} is a cycle. Hence we can successively choose the chains~$Y_F$ by induction on the dimension of~$F$.

We put $W_{\F,\E}=W_{Y_Q}$. The next lemma shows that $W_{\F,\E}$ is independent of the choice of the generalized triangulation~$\{Y_F\}$. It is easy to check that~$W_{\F,\E}$ is also independent of the choice of the set~$\{\varepsilon_{F,G}\}$ in the equivalence class~$\E$.

\begin{lem}
Let $\{Y_F\}$ and $\{Y'_F\}$ be two generalized triangulations of~$(\F,\{\varepsilon_{F,G}\})$. Then $W_{Y_Q}=W_{Y_Q'}$.
\end{lem}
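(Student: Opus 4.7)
The plan is to construct, by induction on $\dim F$, chains $X_F \in C_{\dim F + 1}(\Delta^{\V_F})$ that express the difference $Y_F - Y'_F$ as a boundary modulo the lower-dimensional corrections, and then to observe that at the top dimension both correction terms contribute nothing to $W$.

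For $\dim F = 0$ set $X_F := 0$, which is consistent since $Y_F = Y'_F = \{v\}$. For the inductive step, assume $X_G$ has already been constructed for every $G$ with $\dim G < \dim F$ and satisfies
$$
\partial X_G = Y_G - Y'_G - \sum_{H \in \F_{\dim G - 1}} \varepsilon_{G,H} X_H.
$$
Consider the chain
$$
Z_F := Y_F - Y'_F - \sum_{G \in \F_{\dim F - 1}} \varepsilon_{F,G} X_G \in C_{\dim F}(\Delta^{\V_F}).
$$
Using \eqref{eq_YF} for both $\{Y_F\}$ and $\{Y'_F\}$ together with the inductive formula for $\partial X_G$, the terms $\varepsilon_{F,G}(Y_G - Y'_G)$ cancel, and what remains is
$$
\partial Z_F = \sum_{H \in \F_{\dim F - 2}} \Biggl(\,\sum_{G \in \F_{\dim F - 1}} \varepsilon_{F,G}\varepsilon_{G,H}\Biggr) X_H,
$$
which vanishes by \eqref{eq_epsilon}. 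Hence $Z_F$ is a cycle in the acyclic complex $\Delta^{\V_F}$ and bounds some $X_F \in C_{\dim F + 1}(\Delta^{\V_F})$, completing the inductive step.

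Applying the construction to $F = Q$ produces $X_Q \in C_{n+1}(\Delta^{\V})$ with
$$
Y_Q - Y'_Q = \partial X_Q + \sum_{G \in \F_{n-1}} \varepsilon_{Q,G} X_G.
$$
Taking $W$ of both sides, the contribution $W_{\partial X_Q}$ vanishes by the general relation $W_{\partial X} = 0$ for $X \in C_{n+1}(\Delta^{\V})$ recorded earlier. For each $G \in \F_{n-1}$ the chain $X_G$ lies in $C_n(\Delta^{\V_G})$, so every $n$-simplex occurring in $X_G$ has its entire vertex set inside $\V_G$ and is therefore degenerate; consequently $W_\Delta = 0$ in $A_\F$ for each such $\Delta$, and the whole sum disappears. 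This yields $W_{Y_Q} = W_{Y'_Q}$.

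The only real obstacle is setting up the inductive definition of $X_F$ cleanly; the verification $\partial Z_F = 0$ is a mechanical manipulation using \eqref{eq_epsilon}, and the vanishing at the top splits into the two conceptually distinct mechanisms above — the cycle relation $W_{\partial X} = 0$ and the degeneracy of simplices contained in an $(n-1)$-face. I expect no further difficulty beyond bookkeeping.
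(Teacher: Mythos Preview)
Your proof is correct and follows essentially the same approach as the paper's own proof: both construct, by induction on $\dim F$, chains (called $S_F$ in the paper, $X_F$ in your version) in $C_{\dim F+1}(\Delta^{\V_F})$ satisfying $\partial X_F = Y_F - Y'_F - \sum_G \varepsilon_{F,G} X_G$, and then conclude by observing that $W_{\partial X_Q}=0$ and that each $X_G$ for $G\in\F_{n-1}$ is a combination of degenerate $n$-simplices. Your write-up is in fact slightly more explicit than the paper's in verifying that $Z_F$ is a cycle via~\eqref{eq_epsilon}.
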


\begin{proof}
For each vertex~$v\in\F_0=\V$, we have $Y_v=Y_v'=\{v\}$. Hence for each edge $F\in\F_1$, the $1$-chain $Y_F-Y_F'$ is a cycle with support in~$\Delta^{\V_F}$. Therefore there exists a chain $S_F\in C_2(\Delta^{\V_F})$ such that  $\partial S_F=Y_F-Y_F'$. Further, consecutively for $k=2,\ldots,n$, we can construct chains  $S_F\in C_{k+1}(\Delta^{\V_F})$, $F\in\F_k$, such that  
$$
\partial S_F=Y_F-Y'_F -\sum_{G\in\F_{k-1}}\varepsilon_{F,G}S_G,
$$
since the right-hand sides of these formulae are cycles with supports in the corresponding simplices~$\Delta^{\V_F}$. Then
$$
Y_Q-Y_Q'=\partial S_Q+\sum_{F\in\F_{n-1}}\varepsilon_{Q,F}S_F.
$$
Every chain $S_F$, $F\in\F_{n-1}$, is a linear combination of $n$-simplices $\{v_0,\ldots,v_n\}$ such that $v_0,\ldots,v_n$ are contained in~$F$. Therefore $W_{S_F}=0$ in~$A_{\F}$. Besides, $W_{\partial S_Q}=0$. Thus $W_{Y_Q}=W_{Y'_Q}$.
\end{proof}

\begin{lem}
Let $P$ be an omni-oriented polyhedron of omni-oriented combinatorial type~$(\F,\{\varepsilon_{F,G}\})$. Then the corresponding specialization homomorphism $A_{\F}\to\R$ takes $W_{Y_Q}$ to~$W(P)$.
\end{lem}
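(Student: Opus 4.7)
The plan is to show that the specialization $A_{\F}\to\R$ sends $V_{Y_Q}$ to $V(P)$; multiplication by $2^{[\frac{n}{2}]}n!$ then yields the claim for $W_{Y_Q}$. The starting point is the classical Stokes-type identity
$$
V(v_0, \ldots, v_n) = \sum_{i=0}^{n}(-1)^i V(O, v_0, \ldots, \widehat{v_i}, \ldots, v_n),
$$
valid for any points $O,v_0,\ldots,v_n \in \R^n$ and verified by multilinearity (the case $O=v_0$ is immediate, and both sides are $n$-linear in the translates $v_i-O$). Summing with integer coefficients, for any $n$-chain $Y = \sum_j c_j \Delta_j$ with $\partial Y = \sum_k d_k \tau_k$ one obtains, after specialization, $V_Y = \sum_k d_k V(O, \tau_k)$: the volume of $Y$ equals the cone volume over $\partial Y$ from any chosen $O$. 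Applied to $Y_Q$ together with $\partial Y_Q = \sum_{G \in \F_{n-1}} \varepsilon_{Q,G} Y_G$, the problem reduces to showing that, after specialization, the singular $(n-1)$-chain $\sum_G \varepsilon_{Q,G} P(Y_G)$ represents the oriented boundary $\partial P$ in the sense required by the cone formula for $V(P)$ recalled in the introduction.

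The geometric heart is the following claim, which I would prove by induction on $k = \dim F$: for every $F \in \F_k$, the relations~\eqref{eq_det} force the specialized images of all vertices in $\V_F$ into a common $k$-plane, and the singular $k$-chain $P(Y_F)$ in this plane has local multiplicity function equal to the signed indicator of $F$ with the orientation $\mathcal{O}_F$. The base case $k=0$ is trivial. For the induction step, the relation $\partial Y_F = \sum_G \varepsilon_{F,G} Y_G$ together with the inductive hypothesis applied to each $G < F$ with $\dim G = k-1$ shows that $P(\partial Y_F)$ is the singular $(k-1)$-cycle representing $\partial F$ with the orientation induced from $\mathcal{O}_F$; this is exactly what the signs $\varepsilon_{F,G}$ were defined to encode. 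Two $k$-chains in a $k$-plane with equal singular boundaries have multiplicity functions differing by a locally constant $\Z$-valued function, which must vanish by compact support.

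Applied to the $(n-1)$-faces, the claim says that $\varepsilon_{Q,G} P(Y_G)$ represents $G$ with the orientation induced from $\mathcal{O}_Q$, so summing over $G \in \F_{n-1}$ yields $\partial P$ with its natural boundary orientation. Combined with the Stokes-type reduction above and the paper's cone formula for $V(P)$, this gives that the specialization sends $V_{Y_Q}$ to $V(P)$, and hence $W_{Y_Q}$ to $W(P)$.

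The main obstacle is a rigorous formulation of the "local multiplicity function" in the inductive claim, since faces may be non-convex and have self-touching boundaries. The natural framework is the crossing-number function of $P(\partial Y_F)$ as seen from infinity inside the $k$-plane of $F$, in direct analogy with the function $\lambda$ used in the introduction to define the generalized oriented volume in top dimension. Once this framework is in place, the step that two compactly supported $k$-chains in a $k$-plane with equal singular boundaries have equal multiplicity functions becomes routine.
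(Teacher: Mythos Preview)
Your approach is correct and genuinely different from the paper's. The paper avoids multiplicity functions altogether: it introduces an honest geometric triangulation~$K$ of~$P$ (allowing new vertices, so the vertex set~$\mathcal W$ may strictly contain~$\V$), defines for each face~$F$ the chain $X_F\in C_{\dim F}(\Delta^{\mathcal W_F})$ as the sum of the simplices of~$K$ lying in~$F$, observes that $W(P)=W_{X_Q}$ tautologically, and then shows $W_{Y_Q}=W_{X_Q}$ by rerunning verbatim the chain-homotopy argument of the preceding lemma (constructing $S_F$ with $\partial S_F = Y_F - X_F - \sum_G \varepsilon_{F,G} S_G$). This is shorter and purely algebraic, at the price of invoking an auxiliary triangulation whose existence is clear but not entirely trivial for non-convex polyhedra.

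Your route trades that auxiliary object for a direct geometric induction on the multiplicity (crossing-number) function of $P(Y_F)$ in its $k$-plane. The inductive step is sound once phrased carefully: the multiplicity of $P(Y_F)$ at a generic point equals the crossing number of $P(\partial Y_F)$ with a ray to infinity, and by the inductive hypothesis this coincides with the crossing number of the geometric $\partial F$, hence with the indicator of~$F$. Note that your sentence ``two $k$-chains in a $k$-plane with equal singular boundaries\ldots'' is slightly loose, since the second object (the indicator of~$F$) is not a simplicial chain on~$\V_F$; the honest statement is the crossing-number identity just described. Also, once your induction reaches $k=n$ the Stokes reduction at the start becomes redundant: the case $F=Q$ directly gives that $P(Y_Q)$ has multiplicity equal to~$\mathbf 1_P$, so $V_{Y_Q}=V(P)$ immediately. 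Either variant is fine.
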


\begin{proof}
Consider a triangulation~$K$ of the polyhedron~$P$ such that all faces of~$P$ are subcomplexes of~$K$. (We do not require that the vertex set~$\mathcal{W}$ of~$K$ coincides with $\V$, hence, such triangulation always exists.) We have $\V\subset\mathcal{W}$, hence, $\Delta^{\V}\subset\Delta^{\mathcal W}$. 

To simplify the notation, we shall denote the images of~$W_{Y_F}$ under the specialization homomorphism $A_{\F}\to\R$ by~$W_{Y_F}$ again. The natural embedding $\mathcal W\subset\R^n$ provides that the oriented volume~$V_X\in\R$ and the corresponding number~$W_X$ is defined for every chain $X\in C_n(\Delta^{\mathcal W})$.

Let $F\in\F_k$.  We denote by~$\mathcal W_F$ the set of all vertices of~$K$ belonging to~$F$. We denote by~$X_F$ the sum of all $k$-simplices of~$K$ contained in~$F$ with orientations induced by~$\mathcal{O}_F$. Then $X_F\in C_k(\Delta^{\mathcal W_F})$. It is easy to see that  $\partial X_F=\sum_{G\in\F_{k-1}}\varepsilon_{F,G}X_G$ if $k>0$. Since, $X_Q$ is the sum of all positively oriented $n$-simplices of~$K$, we see that $W(P)=W_{X_Q}$. 

Now, as in the proof of the previous lemma, we consecutively construct chains  $S_F\in C_{\dim F+1}(\Delta^{\mathcal W_F})$ such that  
$$
\partial S_F=Y_F-X_F -\sum_{G\in\F_{\dim F-1}}\varepsilon_{F,G}S_G,
$$
and obtain that $W_{Y_Q}=W_{X_Q}=W(P)$.
\end{proof}

Now we are ready to formulate a universal algebraic version of Theorem~\ref{theorem_main_ns}.

\begin{theorem}\label{theorem_alg_ns}
Let $(\F,\E)$ be a pair that can be obtained as the oriented combinatorial type of an oriented $n$-polyhedron. Then the element $W_{\F,\E}$ is integral over the ring~$R_{\F}$.
\end{theorem}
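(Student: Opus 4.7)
The plan is to reduce Theorem~\ref{theorem_alg_ns} to the simplicial algebraic reformulation, Theorem~\ref{theorem_alg}. Fix a representative $\{\varepsilon_{F,G}\}$ of the class~$\E$ and a generalized triangulation $\{Y_F\}_{F\in\F}$, so that $W_{\F,\E}=W_{Y_Q}$ in~$A_{\F}$. Form the $(n-1)$-chain $Z=\partial Y_Q=\sum_{F\in\F_{n-1}}\varepsilon_{Q,F}Y_F$; this is a cycle on the vertex set~$\V$, and a short determinantal computation (by taking the base point of the volume formula in Section~\ref{section_alg} to be a vertex of each oriented $n$-simplex of $Y_Q$) shows that $W_{Y_Q}=W_Z$ identically in $\Z[\x_{\V}]$. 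Applying Theorem~\ref{theorem_alg} to~$Z$ yields a monic polynomial relation of the form $W_Z^{2N}+a_1W_Z^{2N-2}+\dots+a_N=0$ in $\Z[\x_{\V}]$, with $a_i\in R_{\supp Z}$. Reducing modulo the ideal~$\mathfrak{a}$ produces the same relation in~$A_{\F}$, so $W_{\F,\E}$ is integral over the image of $R_{\supp Z}$ in~$A_{\F}$.

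Every $1$-simplex of $\supp Z$ lies in some $\Delta^{\V_F}$ with $F\in\F_{n-1}$, so it suffices to establish the auxiliary claim $(\star)$: for any two vertices $u,v\in\V$ lying in a common proper face of~$P$, the element $\ell_{uv}\in A_{\F}$ is integral over~$R_{\F}$. I would prove $(\star)$ by induction on the minimal dimension~$k$ of a face of~$P$ containing both vertices. The base case $k=2$ is precisely where the triangular $2$-face hypothesis enters: the face is a triangle, so $\{u,v\}$ is an edge of~$P$ and $\ell_{uv}\in R_{\F}$ by definition. For $k\ge 3$, the face~$F$ is itself a $k$-polyhedron with triangular $2$-faces; by strong induction on~$n$, Theorem~\ref{theorem_alg_ns} is already known for~$F$, and combining this with the Cayley--Menger identities that hold in~$A_{\F}$ (because the generators of~$\mathfrak{a}$ place $\V_F$ into a $k$-plane, making any $k+2$ vertices of~$F$ affinely dependent, cf.\ Corollary~\ref{cor_CM}) propagates integrality from edge squared distances of~$F$ to every diagonal squared distance of~$F$.

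The main obstacle is the inductive step $k\ge 3$. A single Cayley--Menger equation for $k+2$ vertices of~$F$ is quadratic but \emph{not} monic in the chosen diagonal length square, its leading coefficient being (up to sign) the Cayley--Menger determinant of the remaining vertices; hence integrality does not fall out of one relation. The natural remedy is to strengthen the induction so as to include the pair-distance statement itself, and to prove this enhanced statement by the same place-and-clique-complex machinery of Sections~\ref{section_places}--\ref{section_proof_lemma}, now applied to the quotient field $\mathrm{Frac}(A_{\F}/\mathfrak{p})$ for a minimal prime~$\mathfrak{p}$. Concretely, for any place~$\varphi$ of this field that is finite on~$R_{\F}$, one shows that the clique complex~$K_{\varphi}$ necessarily contains every pair of vertices in a common face of~$P$, by exploiting the $k$-plane relations inside~$\mathfrak{a}$ to force finiteness of~$\varphi$ on those $\ell_{uv}$. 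Once this strengthened form of $(\star)$ is in place, the reduction above completes the proof of Theorem~\ref{theorem_alg_ns}.
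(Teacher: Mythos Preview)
Your reduction to Theorem~\ref{theorem_alg} via a \emph{fixed} generalized triangulation is where the argument breaks down, and the paper's proof takes a genuinely different route precisely to avoid this.

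The gap is in $(\star)$. You correctly observe that a single Cayley--Menger relation among $k+2$ vertices of a $k$-face is not monic in the chosen diagonal, so the inductive step does not go through. Your proposed remedy is to show that for every place~$\varphi$ of $\mathrm{Frac}(A_{\F}/\mathfrak{p})$ finite on~$R_{\F}$, the clique complex~$K_{\varphi}$ contains \emph{every} pair $\{u,v\}$ with $u,v$ in a common proper face. But this is strictly stronger than anything established in the paper: Lemma~\ref{lem_gm} only says that $K_{\varphi}\cap\Delta^{\V_F}$ collapses to dimension $\le\lfloor k/2\rfloor$, which is far from saying it equals $\Delta^{\V_F}$. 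You give no argument for the stronger claim beyond the phrase ``exploiting the $k$-plane relations,'' and indeed proving that all face-diagonal squared lengths are integral over~$R_{\F}$ is a separate theorem of comparable depth to the one you are trying to prove.

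The paper sidesteps $(\star)$ entirely by \emph{not} fixing the triangulation in advance. For each place~$\varphi$ finite on~$R_{\F}$, it constructs a generalized triangulation $\{Y_F\}$ \emph{depending on~$\varphi$}, built inductively so that $\supp(Y_F)\subset K_{\varphi}\cap\Delta^{\V_F}$ for every~$F$. The only input needed for the inductive step at a $k$-face~$F$ (with $k\ge 3$) is that the cycle $\sum_{G}\varepsilon_{F,G}Y_G$ bounds inside $K_{\varphi}\cap\Delta^{\V_F}$, and this follows from $H_{k-1}(K_{\varphi}\cap\Delta^{\V_F})=0$, i.e.\ from Lemma~\ref{lem_gm}. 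Since $W_{\F,\E}$ is independent of the triangulation (Lemma~9.1), one may compute it as $W_{Y_Q}$ for this $\varphi$-adapted choice; all $n$-simplices of~$Y_Q$ then lie in~$K_{\varphi}$, and the Cayley--Menger formula gives $\varphi(W_{\F,\E})\ne\infty$ directly. In short: rather than proving that one triangulation works for all places, the paper lets the triangulation vary with the place and needs only a homological vanishing statement, not the full-clique statement you are aiming for.
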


This theorem yields a monic polynomial relation of the form~\eqref{eq_rel_ns} that holds for all oriented polyhedra of the given oriented combinatorial type~$(\F,\E)$. The required polynomial relation that holds for all polyhedra of combinatorial type~$\F$ is obtained by multiplying all such relations for different~$\E$. So Theorem~\ref{theorem_main_ns} follows from Theorem~\ref{theorem_alg_ns}.

To apply Lemma~\ref{lem_place} we would need to embed the ring~$A_{\F}$ into a field. Unfortunately, the author does not know whether the ring~$A_{\F}$ can have zero divisors. Hence we take the quotients of~$A_{\F}$ by prime ideals, and use the following algebraic lemma. 

\begin{lem}\label{lem_prime}
Let $A$ be a Noetherian commutative ring, let $R\subset A$ be a subring, and let~$W\in A$. Suppose that the image of~$W$ in~$A/\mathfrak{p}$ is integral over~$R/(R\cap \mathfrak{p})$ for every prime ideal $\mathfrak{p}\lhd A$. Then $W$ is integral over~$R$. 
\end{lem}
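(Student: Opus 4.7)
The plan is to argue by contradiction using the multiplicative set of values $f(W)$, where $f \in R[x]$ ranges over monic polynomials. Specifically, I would set
$$S = \{f(W) : f \in R[x] \text{ monic}\} \subset A.$$
Since the product of two monic polynomials is monic and evaluation at $W$ is a ring homomorphism, $S$ is multiplicatively closed and contains $1$. The key observation is that $W$ is integral over $R$ if and only if $0 \in S$: indeed, an integral equation $W^n + a_{n-1} W^{n-1} + \cdots + a_0 = 0$ with $a_i \in R$ is exactly the statement $f(W) = 0$ for the monic $f \in R[x]$ with coefficients $a_i$.

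Assume for contradiction that $W$ is not integral over $R$, i.\,e., $0 \notin S$. By the standard consequence of Zorn's lemma (Krull's theorem on existence of primes avoiding a multiplicative set), there exists a prime ideal $\mathfrak{p} \lhd A$ with $\mathfrak{p} \cap S = \emptyset$. By the hypothesis of the lemma, the image $\overline{W}$ of $W$ in $A/\mathfrak{p}$ is integral over $R/(R \cap \mathfrak{p})$. Hence there exists a monic polynomial $\bar{f} \in (R/(R \cap \mathfrak{p}))[x]$ with $\bar{f}(\overline{W}) = 0$. Lifting the non-leading coefficients of $\bar{f}$ arbitrarily to elements of $R$ while keeping the leading coefficient equal to $1$, one obtains a monic polynomial $f \in R[x]$ with $f(W) \in \mathfrak{p}$. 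But by construction $f(W) \in S$, contradicting $\mathfrak{p} \cap S = \emptyset$.

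I do not anticipate any serious obstacle: the only ingredients are the multiplicativity of monic polynomials under ordinary multiplication, the existence of a prime ideal disjoint from a given multiplicative subset, and the routine ability to lift a monic polynomial from $R/(R \cap \mathfrak{p})$ to $R$ by choosing arbitrary preimages of the lower coefficients. It is perhaps worth remarking that the Noetherian hypothesis on $A$ does not seem to enter this argument at all; it is presumably included for the sake of the ambient applications rather than out of logical necessity for the lemma itself.
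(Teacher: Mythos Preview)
Your proof is correct, and it takes a genuinely different route from the paper's. The paper exploits the Noetherian hypothesis directly: it takes the finitely many minimal primes $\mathfrak{p}_1,\ldots,\mathfrak{p}_s$ of~$A$, chooses for each a monic $f_i\in R[t]$ with $f_i(W)\in\mathfrak{p}_i$, and forms the product $f=f_1\cdots f_s$; then $f(W)$ lies in $\bigcap_i\mathfrak{p}_i$, the nilradical, so $f(W)^q=0$ for some~$q$ and $f^q$ is the desired monic relation. Your argument instead packages the problem via the multiplicative set $S=\{f(W):f\in R[x]\text{ monic}\}$ and appeals to Krull's theorem to produce a prime avoiding~$S$, reaching a contradiction.

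Your approach is cleaner and, as you noticed, shows that the Noetherian hypothesis is superfluous for the lemma itself. The paper's argument, by contrast, is more explicit: it actually exhibits the integral equation as a power of a product of finitely many given ones, which could in principle be useful if one cared about bounding its degree or shape. In the context of the paper the Noetherian assumption is automatic (the ring $A_{\F}$ is finitely generated over~$\Z$), so either proof serves equally well.
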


For the convenience of the reader, we give the proof of this lemma. We thank S.~O.~Gorchinsky for communicating this proof to us. 

\begin{proof}
Since $A$ is Noetherian, it follows that $A$ contains finitely many minimal (with respect to inclusion) prime ideals~$\mathfrak{p}_1,\ldots,\mathfrak{p}_s$, and any prime ideal $\mathfrak{p}\lhd A$ contains one of the ideals~$\mathfrak{p}_i$ (cf.~\cite[p.~47]{Eis95}). 
For every $i=1,\ldots,s$, the image of~$W$ in~$A/\mathfrak{p}_i$ is integral over~$R/(R\cap \mathfrak{p}_i)$. It follows that there exists a monic polynomial $f_i\in R[t]$ such that $f_i(W)\in\mathfrak{p}_i$. Let $f$ be the product of the polynomials~$f_i$. Then $f$ is monic and $f(W)$ belongs to the intersection $\bigcap_{i=1}^s\mathfrak{p}_i$, which coincides with the set of all nilpotent elements of~$A$ (cf.~\cite[p.~71]{Eis95}). Hence there exists a positive integer~$q$ such that $f^q(W)=0$. The lemma follows, since $f^q$ is monic.
\end{proof}

Let $\mathfrak{p}\lhd A_{\F}$ be a prime ideal. For the sake of simplicity we denote the images of the elements~$x_{v,i}$, $\ell_{uv}$, and~$W_{\F,\E}$ in~$A/\mathfrak{p}$ again by~$x_{v,i}$, $\ell_{uv}$, and~$W_{\F,\E}$ respectively.
Let $E_{\F,\mathfrak{p}}$ be the quotient field of the ring~$A/\mathfrak{p}$. 
Consider a place $\varphi\colon E_{\F,\mathfrak{p}}\to L\cup\{\infty\}$.
As in section~\ref{section_scheme}, let $K_{\varphi}\subset\Delta^{\V_F}$ be the full subcomplex spanned by all edges $\{u,v\}$ such that $\varphi(\ell_{uv})\ne\infty$.
The proof of Theorem~\ref{theorem_alg_ns} is based on the following generalization of Main Lemma.

\begin{lem}\label{lem_gm}
Let $\varphi\colon E_{\F,\mathfrak{p}}\to L\cup\{\infty\}$ be a place and let~$F\in\F_k$. Then the simplicial complex~$K_{\varphi}\cap\Delta^{\V_F}$ collapses on a subcomplex of dimension less than or equal to~$\bigl[\frac{k}{2}\bigr]$.
\end{lem}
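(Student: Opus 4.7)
The plan is to transplant the proof of the Main Lemma (sections~\ref{section_ord} and~\ref{section_proof_lemma}) into the simplicial complex $K_{\varphi}\cap\Delta^{\V_F}$ on the vertex set $\V_F$, with the ambient dimension $n$ replaced by the face dimension $k$ throughout. The new ingredient that makes this possible is that the relations~\eqref{eq_det} defining $A_{\F}$, when read in the quotient field $E_{\F,\mathfrak{p}}$, assert precisely that any $k+2$ vertices of $\V_F$ are affinely dependent inside $E_{\F,\mathfrak{p}}^n$, so by Corollary~\ref{cor_CM} their Cayley--Menger determinant vanishes in $E_{\F,\mathfrak{p}}$. This vanishing takes the place of the trivial affine dependence of $n+2$ points in $\R^n$ that is exploited in the proof of Proposition~\ref{propos_main}.

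First, I would run the construction of section~\ref{section_ord} inside $K_{\varphi}\cap\Delta^{\V_F}$; since that construction uses only the valuation $|\cdot|_{\varphi}$ and the values $|x_{v,1}|_{\varphi}$ of coordinates, it produces verbatim orderings $\succ$ on the sets of $j$-simplices of $K_{\varphi}\cap\Delta^{\V_F}$ satisfying condition~(i) together with~(ii)${}'$ for odd $k$ or~(ii)${}''$ and~(iii)${}''$ for even $k$. Second, I would prove the analogue of Proposition~\ref{propos_main}: if $\sigma,\tau\subset\V_F$ are simplices of $K_{\varphi}$ of the same dimension $>\frac{k}{2}$ with $\mu(\sigma)=\mu(\tau)$, then $\sigma\cup\tau\in K_{\varphi}$. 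For odd $k=2r-1$ the original argument transfers without change, since the Cayley--Menger relation it invokes is that of the $2r+1=k+2$ points $u_1,\dots,u_r,v_0,\dots,v_r\in\V_F$, and after dividing the relevant rows and columns by $h_i=\ell_{u_iv_i}$ one applies $\varphi$ to reach the same block matrix of determinant $\pm 1$. Finally, the collapsing procedure of section~\ref{section_proof_lemma} applies verbatim to $K_{\varphi}\cap\Delta^{\V_F}$ and terminates at dimension $\le\bigl[\tfrac{k}{2}\bigr]$.

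The main obstacle is Case~2 (even $k=2r-2$), in which one introduces an auxiliary point $z\in E_{\F,\mathfrak{p}}^n$ so that the $2r=k+2$ points $u_2,\dots,u_r,z,v_1,\dots,v_r$ have vanishing Cayley--Menger determinant while $\varphi(\ell_{zv_1})\ne 0$ and $\varphi(\ell_{zw})\ne\infty$ for all relevant $w$. The vanishing now requires $z$ to lie in the affine hull of $\V_F$ inside $E_{\F,\mathfrak{p}}^n$, whereas the perturbation $x_{z,1}=x_{v_0,1}+a$ of section~\ref{section_ord} in general leaves that hull. I would replace it by a perturbation $z=v_0+a(v_j-v_0)$ along the direction of some vertex $v_j\in\V_F$ chosen according to the ordering~$\succ$, redefining the trichotomy for the scalar $a$ in terms of the valuation of a suitable coordinate of $v_j-v_0$, and correspondingly modifying condition~(iii)${}''$ to control directional perturbations within the tangent space of $F$. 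With these adjustments the proof of Lemma~\ref{lem_z}, the derivation of equation~\eqref{eq_AB}, and the concluding contradiction $\varphi(\det B)=\pm 1$ versus $\varphi\bigl(\tfrac12\det A_0\bigr)=0$ all carry over.
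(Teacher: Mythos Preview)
Your strategy of transplanting the Main Lemma's proof into $K_{\varphi}\cap\Delta^{\V_F}$ is sound in outline, but the paper takes a genuinely different and much shorter route. Rather than re-running the argument of sections~\ref{section_ord}--\ref{section_proof_lemma}, the paper reduces Lemma~\ref{lem_gm} to the Main Lemma itself, applied as a black box. The idea is this: the relations~\eqref{eq_det} force the points $v\in\V_F$ to span a $k$-dimensional affine subspace of $E_{\F,\mathfrak{p}}^n$; after passing to a field extension $E'\supset E_{\F,\mathfrak{p}}$ one chooses an orthonormal basis of its tangent space and obtains coordinates $y_{v,1},\ldots,y_{v,k}\in E'$ with $\ell_{uv}=\sum_{i=1}^k(y_{u,i}-y_{v,i})^2$. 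Now introduce \emph{independent} variables $z_{v,i}$ ($v\in\V_F$, $1\le i\le k$), define a place $\psi\colon\Q(\mathbf z_F)\to E'\cup\{\infty\}$ by $z_{v,i}\mapsto y_{v,i}$, extend $\varphi$ to $\varphi'$ on $E'$, and apply the Main Lemma to the composite place $\Phi=\varphi'\circ\psi$. Since $\psi(l_{uv})=\ell_{uv}$, one has $K_{\Phi}=K_{\varphi}\cap\Delta^{\V_F}$, and the Main Lemma gives the collapse to dimension $\le[k/2]$ directly.

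This reduction buys exactly what your proposal struggles with: the auxiliary point $z$ and condition~(iii)${}''$ never need to be touched, because in the $z$-coordinates one is back in the generic situation of $\Q(\mathbf z_F)$ where the first-coordinate perturbation of section~\ref{section_ord} works verbatim. By contrast, your proposed fix $z=v_0+a(v_j-v_0)$ is not obviously adequate: the direction $v_j-v_0$ may be isotropic or have small $|\ell_{v_0v_j}|_{\varphi}$, and then the competing requirements $\varphi(\ell_{zv_1})\ne 0$ and $\varphi(\ell_{zw})\ne\infty$ cannot both be met by tuning $a$ alone. Making your approach rigorous in Case~2 essentially forces you to pick a unit-norm direction in the tangent space---which is precisely the orthonormalization step that the paper performs once and then leverages to invoke the Main Lemma wholesale.
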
 
\begin{cor}\label{cor_main_ns}
If $k\ge 3$, then $H_{k-1}(K_{\varphi}\cap\Delta^{\V_F})=0$.
\end{cor}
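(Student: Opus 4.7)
The strategy is to imitate the proof of the Main Lemma (Sections~\ref{section_ord} and~\ref{section_proof_lemma}) locally inside the face~$F$, with the ambient dimension~$n$ replaced by~$k$, the complex~$K_{\varphi}$ replaced by $K_{\varphi}\cap\Delta^{\V_F}$, and the field~$\Q(\x_{\V})$ replaced by~$E_{\F,\mathfrak{p}}$.

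The key algebraic input is the Cayley--Menger vanishing in dimension~$k$. The defining relations~\eqref{eq_det} of~$A_{\F}$ state precisely that every $k+1$ vertices of~$F$ are affinely dependent, so the Cayley--Menger determinant of any $k+2$ vertices of~$F$ vanishes in~$A_{\F}$, and hence in~$A_{\F}/\mathfrak{p}$ and in~$E_{\F,\mathfrak{p}}$. More generally, if one of the $k+2$ points is replaced by any point of~$E_{\F,\mathfrak{p}}^{\,n}$ lying in the affine span of~$\V_F$, the Cayley--Menger determinant still vanishes. This plays the role of Corollary~\ref{cor_CM} in our setting.

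I would then construct total orderings~$\succ$ on the sets of $j$-simplices of $K_{\varphi}\cap\Delta^{\V_F}$ exactly as in Section~\ref{section_ord}, so that the analogues of conditions~(i), (ii)${}'$ hold for odd~$k$ and (i), (ii)${}''$, (iii)${}''$ for even~$k$. With these orderings in hand, I would prove the analogue of Proposition~\ref{propos_main}: if $\sigma,\tau$ are simplices of $K_{\varphi}\cap\Delta^{\V_F}$ with $\dim\sigma=\dim\tau>k/2$ and $\mu(\sigma)=\mu(\tau)$, then $\sigma\cup\tau\in K_{\varphi}\cap\Delta^{\V_F}$. Once this is established, the successive collapses of Section~\ref{section_proof_lemma} applied inside~$\Delta^{\V_F}$ reduce $K_{\varphi}\cap\Delta^{\V_F}$ to a subcomplex of dimension at most~$[k/2]$, giving the lemma.

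The analogue of Proposition~\ref{propos_main} for odd~$k$ follows Case~1 of the original proof verbatim with $n$ replaced by~$k$, since only Cayley--Menger of $k+2$ vertices of~$F$ is needed. The main technical obstacle is the even case $k=2r-2$, where the auxiliary point~$z$ must be chosen within the affine span of~$\V_F$ so that the Cayley--Menger relation for the configuration $u_2,\ldots,u_r,z,v_1,\ldots,v_r$ still vanishes; a modification of~$v_0$ along a fixed coordinate direction of~$\R^n$ will not in general lie in this affine span. A natural substitute is $z=v_0+a(v^*-v_0)$ for some vertex $v^*\in\V_F\setminus\{v_0\}$ and some scalar $a\in E_{\F,\mathfrak{p}}$; the squares of distances~$\ell_{zw}$ for $w\in\V_F$ then admit the closed-form expression
$$
\ell_{zw}=\ell_{v_0w}+a\bigl(\ell_{v^*w}-\ell_{v_0w}-\ell_{v_0v^*}\bigr)+a^2\ell_{v_0v^*},
$$
purely in terms of~$a$ and the~$\ell$'s among $\V_F\cup\{v^*\}$, with no reference to ambient coordinates. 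I would select~$a$ by a case analysis on the valuations $|\ell_{v_0v^*}|_{\varphi}$ and $|\ell_{v_0v_1}|_{\varphi}$ entirely parallel to the one in Lemma~\ref{lem_z} (with the tangent direction $v^*-v_0$ playing the role of the first coordinate), arranging that $\varphi(\ell_{zv_1})\ne 0$ and $\varphi(\ell_{zw})\ne\infty$ for all $w\in\{v_1,\ldots,v_r,u_2,\ldots,u_r\}$. After this substitution, the block-matrix manipulations of Case~2 go through unchanged.
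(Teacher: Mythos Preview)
Your plan differs substantially from the paper's, and while the odd-$k$ half would go through, the even-$k$ half has a real gap.

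The paper does not re-run the argument of Sections~\ref{section_ord}--\ref{section_proof_lemma} at all; it \emph{reduces} to the Main Lemma itself. Since the relations~\eqref{eq_det} force the points of~$\V_F$ to lie in a $k$-dimensional affine subspace $\Pi\subset E_{\F,\mathfrak{p}}^{\,n}$, one passes to an extension~$E'$ in which the parallel linear subspace carries an orthonormal basis, so that each $v\in\V_F$ acquires intrinsic coordinates $(y_{v,1},\ldots,y_{v,k})\in(E')^k$ with $\ell_{uv}=\sum_{i=1}^k(y_{u,i}-y_{v,i})^2$. One then introduces \emph{fresh independent} variables $z_{v,i}$ ($v\in\V_F$, $1\le i\le k$), sends $z_{v,i}\mapsto y_{v,i}$ to get a place $\psi\colon\Q(\mathbf{z}_F)\to E'\cup\{\infty\}$, and composes with an extension~$\varphi'$ of~$\varphi$ to obtain a place~$\Phi$ of the rational function field~$\Q(\mathbf{z}_F)$. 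The Main Lemma applies to~$\Phi$ verbatim (with $n$ replaced by~$k$), and since $\psi$ sends $\sum_i(z_{u,i}-z_{v,i})^2$ to~$\ell_{uv}$ one gets $K_\Phi=K_\varphi\cap\Delta^{\V_F}$. Nothing from the proof of the Main Lemma is repeated or modified.

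Your direct approach stumbles in the even case precisely because you are no longer over a field of independent variables. In the original proof, condition~(iii)${}''$ and the construction of~$z$ hinge on the \emph{global} first coordinate~$x_{\cdot,1}$. Once you replace $e_1$ by a vertex direction $v^*-v_0$, the role of $x_{v_0,1}-x_{w,1}$ is taken by $\tfrac12(\ell_{v^*w}-\ell_{v_0w}-\ell_{v_0v^*})$, which involves~$\ell_{v^*w}$ --- a quantity over whose valuation you have no control, so your intended analogue of~(iii)${}''$ may not even compare finite elements. More seriously, there is no guarantee any single~$v^*$ works: for a prime~$\mathfrak{p}$ in which $\ell_{v_0v^*}=0$ and $\ell_{v^*v_1}-\ell_{v_0v_1}-\ell_{v_0v^*}=0$ in~$E_{\F,\mathfrak{p}}$ for every $v^*\in\V_F$, your formula gives $\ell_{zv_1}=\ell_{v_0v_1}$ regardless of~$a$, and $\varphi(\ell_{zv_1})\ne 0$ is unachievable. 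The paper's trick of reintroducing independent variables via~$\Q(\mathbf{z}_F)$ is exactly what restores a genuine ``first coordinate direction'' and lets Case~2 of Proposition~\ref{propos_main} run unchanged.
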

\begin{proof}
For simplicity, we put $E=E_{\F,\mathfrak{p}}$.
Consider all points $v\in \V$ as the points $v=(x_{v,1},\ldots,x_{v,n})\in E^n$. Since determinants~\eqref{eq_det} vanish for the points $v_i$ in~$\V_F$, we see that all points $v\in\V_F$ lie in a $k$-dimensional affine subspace $\Pi\subset E^n$. Let $(e_1,\dots,e_n)$ be the standard basis of~$E^n$.
The squares of the distances~$\ell_{uv}$ are determined with respect to the natural inner product in~$E^n$ such that $(e_i,e_j)=\delta_{ij}$. 
Restrict this inner product to the vector subspace $U\subset E^n$ parallel to~$\Pi$. 
For an appropriate extension $E'\supset E$, we can find a basis $f_1,\ldots,f_k$ of $U_{E'}=U\otimes E'$ such that $(f_i,f_j)=\delta_{ij}$. Choose an arbitrary point $O\in\Pi$ and consider the affine coordinate system in~$\Pi_{E'}$ with origin~$O$ and basis $f_1,\ldots,f_k$.
 We denote the coordinates of $v\in\V_{F}$ in this coordinate system by $(y_{v,1},\ldots,y_{v,n})$; then $y_{v,i}\in E'$.
 Since $(f_i,f_j)=\delta_{ij}$, we have
\begin{equation}\label{eq_xy}
\ell_{uv}=\sum_{j=1}^n(x_{u,j}-x_{v,j})^2=\sum_{i=1}^k(y_{u,i}-y_{v,i})^2
\end{equation}
for every $u,v\in\V_{F}$.

Choose an arbitrary extension $\varphi'\colon E'\to L'\cup\{\infty\}$ of the place~$\varphi$.

Now consider independent variables $z_{v,i}$, $v\in\V_F$, $i=1,\ldots,k$. We denote the set of these variables by~$\mathbf{z}_F$ and the field of rational functions in these variables by~$\Q(\mathbf{z}_F)$. The correspondence 
$
z_{v,i}\mapsto y_{v,i}
$ uniquely extends to the place $\psi\colon \Q(\mathbf{z}_F)\to E'\cup\{\infty\}$. Consider the composite place
$$
\Phi\colon \Q(\mathbf{z}_F)\xrightarrow{\psi} E'\cup\{\infty\}\xrightarrow{\varphi'} L'\cup\{\infty\},
$$ 
and the full subcomplex $K_{\Phi}\subset\Delta^{\V_F}$ spanned by all edges $\{u,v\}$ such that $\Phi$ is finite on
the element $l_{uv}=\sum_{i=1}^k(z_{u,i}-z_{v,i})^2$. Applying Main Lemma to the place~$\Phi$, we obtain that $K_{\Phi}$ collapses on a subcomplex of dimension less than or equal to~$[\frac{k}{2}]$.
By~\eqref{eq_xy}, we have $\psi(l_{uv})=\ell_{uv}$. Hence $K_{\varphi}\cap\Delta^{\V_F}=K_{\Phi}$, which completes the proof of the lemma.
\end{proof}

\begin{proof}[Proof of Theorem~\ref{theorem_main_ns}]
The ring~$A_{\F}$ is Noetherian. By Lemma~\ref{lem_prime}, we need to prove that $W_{\F,\E}$ is integral over $R_{\F}/(R_{\F}\cap\mathfrak{p})$ for every prime ideal $\mathfrak{p}\lhd A_{\F}$. Hence we need to prove that $\varphi(W_{\F,\E})\ne\infty$ for every place $\varphi\colon E_{\F,\mathfrak{p}}\to L\cup\{\infty\}$ that is finite on~$R_{\F}/(R_{\F}\cap\mathfrak{p})$. 

Let $\{\varepsilon_{F,G}\}$ be a set of numbers in the equivalence class~$\E$. We shall construct a generalized triangulation~$\{Y_F\}$ of an omni-oriented polyhedron of omni-oriented oriented combinatorial type~$(\F,\{\varepsilon_{F,G}\})$ such that $\supp(Y_F)\subset K_{\varphi}\cap\Delta^{\V_F}$. For every vertex $v\in\F_0=\V$, we put $Y_v=\{v\}$. For every edge $e\in\F_1$ oriented from $u$ to~$v$, we put $Y_e=\{u,v\}$. Every $2$-face $F\in\F_2$ is triangular, i.\,e., contains exactly $3$ vertices. Let $u,v,w$ be the vertices of~$F$ listed so that to give the orientation~$\mathcal{O}_F$ of~$F$. Then we put $Y_F=\{u,v,w\}$. Obviously, conditions~\eqref{eq_YF} hold so far. If vertices~$u$ and~$v$ are joined by an edge $e\in\F_1$, then $\ell_{uv}\in R_{\F}/(R_{\F}\cap\mathfrak{p})$ and hence $\varphi(\ell_{uv})\ne\infty$. Therefore the supports of all chains $Y_F$, $\dim F\le 2$, are contained in~$K_{\varphi}$.

Further, we proceed by induction on~$k$. Let $k\ge 3$. Suppose that the chains $Y_G$, $G\in\F_{k-1}$,  have already been constructed, and consider $F\in\F_k$. The support of the $(k-1)$-chain $Z_F=\sum_{G\in\F_{k-1}}\varepsilon_{F,G}Y_G$ is contained in  $K_{\varphi}\cap\Delta^{\V_F}$. 
Hence it follows from Corollary~\ref{cor_main_ns} that there is a $k$-chain~$Y_F$ with support contained in $K_{\varphi}\cap\Delta^{\V_F}$ such that $\partial Y_F=Z_F$. 

Since $\{Y_F\}$ is a generalized triangulation, we have $W_{\F,\E}=W_{Y_Q}$. 
The Cayley--Menger formula implies that $\varphi(W_{\Delta})\ne\infty$ for all $n$-simplices $\Delta\in K_{\varphi}$. Since $\supp(Y_Q)\subset K_{\varphi}$, we obtain that $\varphi(W_{\F,\E})=\varphi(W_{Y_Q})\ne \infty$. 
\end{proof}

If in the previous argument we additionally assume that the place~$\varphi$ is finite on the squares of the diagonal lengths of all $2$-faces of~$P$, then we can waive the requirement that all $2$-faces of~$P$ are triangular. So we obtain that for any $n$-polyhedron~$P$, there exists a relation of the form
$$
W^{2N}+b_1(\ell,d)W^{2N-2}+\dots+b_N(\ell,d)=0,
$$
where $b_i(\ell,d)$ are polynomials with integral coefficients in the squares of the edge lengths of~$P$ and the squares of the diagonal lengths of $2$-faces of~$P$. This implies Corollary~\ref{cor_flex_ns}.

\end{document}